\newtheorem{theorem}{Theorem}[section]
\newtheorem{proposition}[theorem]{Proposition}
\newtheorem{corollary}[theorem]{Corollary}
\newtheorem{lemma}[theorem]{Lemma}
\newtheorem{remarks}[theorem]{Remarks}
\begin{document}

\title{Morita equivalence of inverse semigroups}

\author{B.~Afara}
\address{Department of Mathematics\\
Heriot-Watt University\\
Riccarton\\
Edinburgh~EH14~4AS\\
Scotland}
\email{B.Afara@ma.hw.ac.uk}
\thanks{The first author would like to thank Aleppo University, Syria and the British Council for their support.}

\author{M.~V.~Lawson}
\address{Department of Mathematics and the Maxwell Institute for Mathematical Sciences\\
Heriot-Watt University\\
Riccarton\\
Edinburgh~EH14~4AS\\
Scotland}
\email{M.V.Lawson@ma.hw.ac.uk}
\thanks{The second author would like to thank Prof Gracinda Gomes and the CAUL project ISFL-1-143 supported by FCT }

\keywords{}

\subjclass{}

\begin{abstract}
We describe how to construct all inverse semigroups Morita equivalent to a given inverse semigroup $S$.
This is done by taking the maximum inverse images of the regular Rees matrix semigroups
over $S$ where the sandwich matrix satisfies what we call the McAlister conditions.
\end{abstract}

\maketitle
\section{Introduction}

The Morita theory of monoids was introduced independently by Banaschewski \cite{Ban} and Knauer \cite{K} as the analogue of the classical Morita theory of rings \cite{Lam}.
This theory was extended to semigroups with local units by  Talwar \cite{T1,T2,T3};
a semigroup $S$ is said to have {\em local units} if for each $s \in S$ there exist idempotents $e$ and $f$ such that $s = esf$.
Inverse semigroups have local units and the definition of Morita equivalence in their case assumes the following form.
Let $S$ be an inverse semigroup.
If $S$ acts on a set $X$ in such a way that $SX = X$ we say that the action is {\em unitary}.
We denote by $S$-\mbox{\bf mod} the category of unitary left $S$-sets and their  left $S$-homomorphisms.
Inverse semigroups $S$ and $T$ are said to be {\em Morita equivalent} if the categories  $S$-\mbox{\bf mod} and  $T$-\mbox{\bf mod} are equivalent.
There have been a number of recent papers on this topic \cite{FLS,L3,L4,S} and ours takes the development of this theory a stage further.

Rather than taking the definition of Morita equivalence as our starting point, we shall use instead two characterizations that are much easier to work with.
We denote by $C(S)$ the {\em Cauchy completion} of the semigroup $S$.
This is the category with elements triples of the form $(e,s,f)$, 
where $s = esf$ and $e$ and $f$ are idempotents, and multiplication given by $(e,s,f)(f,t,g) = (e,st,g)$.

The first characterization is the following \cite{FLS}.

\begin{theorem} Let $S$ and $T$ be semigroups with local units.
Then $S$ and $T$ are Morita equivalent if and only if their Cauchy completions are equivalent.
\end{theorem}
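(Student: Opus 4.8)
The plan is to route the equivalence through a functor category, by showing that $S$-\mbox{\bf mod} is equivalent to the presheaf category $[C(S)^{\mathrm{op}},\mathbf{Set}]$. The candidate functor $\Psi$ sends a unitary left $S$-set $X$ to the presheaf $e \mapsto eX$ on $C(S)$, where a morphism $(e,a,f)\colon e \to f$ acts contravariantly by $y \mapsto ay$ from $fX$ to $eX$; this lands in $eX$ because $a = eaf$ forces $ay = eay$. First I would record the Yoneda-type isomorphism $\mathrm{Hom}_S(Se,X)\cong eX$, natural in $X$: a homomorphism $Se \to X$ is determined by the image of $e$, which must satisfy $\varphi(e) = \varphi(ee) = e\varphi(e)$ and so lies in $eX$. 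In particular each principal ideal $Se$ is unitary (using $S = S^2$, itself a consequence of local units) and is carried by $\Psi$ to the representable presheaf $C(S)(-,e)$, since $\mathrm{Hom}_S(Se,Sf)\cong eSf$ with composition matching the rule $(e,a,f)(f,b,g)=(e,ab,g)$. This identifies the full subcategory of $S$-\mbox{\bf mod} on the objects $\{Se\}$ with $C(S)$ itself, and exhibits $\Psi$ restricted to it as the Yoneda embedding.

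Next I would prove that $\Psi$ is an equivalence. Faithfulness and fullness both rest on the identity $X = \bigcup_e eX$, valid because local units together with $SX = X$ guarantee that every $x \in X$ lies in $eX$ for some idempotent $e$: a homomorphism is then determined by its restrictions to the subsets $eX$, and conversely a compatible family of maps $eX \to eY$ glues to a single $S$-homomorphism $X \to Y$. Essential surjectivity is the substantive point: given an arbitrary presheaf $F$ on $C(S)$, I would reconstruct a unitary $S$-set $X$ with $\Psi X \cong F$, realising $X$ as the colimit of representables that $F$ canonically presents and checking that $eX \cong F(e)$ for every idempotent $e$. I expect this reconstruction to be the main obstacle, since it is here that the local-unit hypothesis is indispensable: one must weld the values $F(e)$ into a genuine associative $S$-action rather than a mere diagram of sets, and then verify that the two passages are mutually inverse up to natural isomorphism.

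Granting the equivalence $S$-\mbox{\bf mod}$\,\simeq[C(S)^{\mathrm{op}},\mathbf{Set}]$, both implications follow formally. For the converse direction, an equivalence $C(S)\simeq C(T)$ induces an equivalence of presheaf categories, whence $S$-\mbox{\bf mod} and $T$-\mbox{\bf mod} are equivalent. For the forward direction I would recover $C(S)$ intrinsically by characterising the objects $Se$ as the \emph{tiny} (small-projective) objects, namely those $X$ for which the functor $\mathrm{Hom}_S(X,-)$ preserves all small colimits; for $X = Se$ this functor is $Y \mapsto eY$, which preserves colimits since these are computed pointwise in the presheaf picture. The tiny objects are closed under retracts and, being the retracts of the representables, form exactly the Cauchy completion of $C(S)$; since idempotents already split in $C(S)$ (an idempotent $(e,i,e)$ splits through the object $i$), no spurious objects appear and this subcategory is precisely $C(S)$. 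As tininess is a purely categorical property, any equivalence $S$-\mbox{\bf mod}$\,\simeq\,$$T$-\mbox{\bf mod} restricts to an equivalence between the tiny objects of the two sides, giving $C(S)\simeq C(T)$ and completing the proof.
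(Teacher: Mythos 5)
A preliminary remark: the paper does not prove this theorem itself; it quotes it from \cite{FLS}, so your proposal can only be compared with the proofs in the literature (FLS and Lawson's paper on semigroups with local units), which are phrased in terms of Talwar's definition of Morita equivalence via \emph{closed} (firm) acts and proceed through Morita contexts, consolidations and local isomorphisms rather than through an identification with a presheaf category.

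That points to the genuine gap in your argument: for a general semigroup with local units, the category of \emph{unitary} left $S$-sets is \emph{not} equivalent to $[C(S)^{\mathrm{op}},\mathbf{Set}]$, and your functor $\Psi$ is not full. Take $S=\{a,b\}$ with multiplication $xy=x$ (a left-zero semigroup; it has local units since every element is idempotent, but it is not inverse). Then $eSf=\{e\}$ for all idempotents $e,f$, so $C(S)$ is the indiscrete category on two objects, equivalent to the terminal category. Consider the unitary $S$-sets $X=\{z\}$ with $az=bz=z$, and $Y=\{u,v\}$ with $au=av=u$ and $bu=bv=v$. Both $\Psi X$ and $\Psi Y$ are isomorphic to the terminal presheaf, yet $\mathrm{Hom}_S(X,Y)=\emptyset$, since any $S$-map would need $f(z)=af(z)\in aY=\{u\}$ and $f(z)=bf(z)\in bY=\{v\}$ simultaneously. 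So the isomorphism $\Psi X\cong\Psi Y$ does not lift, and your gluing argument for fullness fails at exactly this point: an element may lie in $eX\cap e'X$, and naturality only yields $\alpha_e(x)=e\,\alpha_{e'}(x)$, not $\alpha_e(x)=\alpha_{e'}(x)$. The failure is not cosmetic: for this $S$ and the trivial semigroup $T$ one has $C(S)\simeq C(T)$, while the unitary act categories are inequivalent (in unitary $S$-acts the terminal object $\{z\}$ admits no morphism to the non-initial object $\{u,v\}$, which cannot happen in $\mathbf{Set}$); thus with ``Morita equivalent'' read as equivalence of unitary act categories, the theorem you set out to prove is false. The statement is true when $S$-\mbox{\bf mod} is taken to consist of the closed unitary acts, i.e.\ those $X$ for which $S\otimes_S X\to X$ is bijective — this is the definition implicitly in force for general semigroups with local units. (For inverse semigroups every unitary act is closed: writing $s\otimes x=e\otimes sx$ with $e$ an idempotent fixing $sx$ and using that idempotents commute, one gets injectivity; this is why the paper's introductory definition is harmless in the inverse case.) Your overall plan — including the tiny-objects characterization of the Cauchy completion, which is correct, as is the observation that idempotents split in $C(S)$ — could be repaired by working with closed acts throughout, but as written both the fullness argument and the deferred ``reconstruction'' step founder precisely on the unitary-versus-closed distinction.
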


To describe the second characterization we shall need the following definition from \cite{S}.
Let $S$ and $T$ be inverse semigroups.
An {\em equivalence biset from $S$ to $T$} consists of an $(S,T)$-biset $X$ equipped with surjective functions
$$
\langle -,- \rangle \colon \: X \times X \rightarrow S\;,
\text{ and }
[-,-] \colon \:X \times X \rightarrow T
$$
such that the following axioms hold,
where $x,y,z \in X$, $s \in S$, and $t \in T$:
\begin{description}

\item[{\rm (M1)}] $\langle sx,y \rangle = s\langle x,y \rangle$

\item[{\rm (M2)}] $\langle y,x \rangle = \langle x,y \rangle^{-1}$

\item[{\rm (M3)}] $\langle x,x \rangle x = x$

\item[{\rm (M4)}] $[x,yt] = [x,y]t$

\item[{\rm (M5)}] $[x,y] = [y,x]^{-1}$

\item[{\rm (M6)}] $x[x,x] = x$

\item[{\rm (M7)}] $\langle x,y \rangle z = x [y,z]$.

\end{description}

Observe that by (M6) and (M7), we have that
$\langle x, x \rangle x = x [x,x] = x$.

Recall that a {\em weak equivalence} from one category to another is a functor that is full, faithful and essentially surjective.
By the Axiom of Choice, categories are equivalent if and only if there is a weak equivalence between them.
It is not hard to see, Theorem~5.1 of \cite{S}, that if there is an equivalence biset from $S$ to $T$ then there is a weak equivalence
from $C(S)$ to $C(T)$ and so by Theorem~1.1, the inverse semigroups $S$ and $T$ are Morita equivalent.
In fact, the converse is true by Theorem~2.14 of \cite{FLS}.

\begin{theorem} Let $S$ and $T$ be inverse semigroups.
Then $S$ and $T$ are Morita equivalent if and only if there is an equivalence biset from $S$ to $T$.
\end{theorem}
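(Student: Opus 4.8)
The plan is to prove the two implications separately, in each case passing through the Cauchy completions and invoking Theorem~1.1, which reduces Morita equivalence of $S$ and $T$ to the existence of an equivalence of the categories $C(S)$ and $C(T)$; by the Axiom of Choice such an equivalence exists precisely when there is a weak equivalence between them. Thus the whole problem becomes one of translating between equivalence bisets, on the one hand, and weak equivalences $C(S) \to C(T)$, on the other.

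For the direction that an equivalence biset $X$ from $S$ to $T$ forces Morita equivalence --- this is the easier half, and is Theorem~5.1 of \cite{S} --- I would build a functor $F \colon C(S) \to C(T)$ directly from the biset. On objects, given an idempotent $e$ of $S$, I would use the surjectivity of $\langle -,- \rangle$ to choose $x \in X$ with $\langle x,x \rangle = e$ and set $F(e) = [x,x]$, which is an idempotent of $T$ by (M5); on a morphism $(e,s,f)$ I would assign a triple in $T$ manufactured from $[-,-]$ applied to suitable translates of the chosen elements. The axioms then do the work: (M1)--(M2) and (M4)--(M5) show the $T$-valued pairing is well behaved under the two actions, (M3) and (M6) supply the local identities, and functoriality --- that $F$ respects the composition $(e,s,f)(f,t,g)=(e,st,g)$ --- comes down to the linking axiom (M7). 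Essential surjectivity uses surjectivity of $[-,-]$ onto the idempotents of $T$, while fullness and faithfulness follow from (M7) together with the non-degeneracy encoded in (M3) and (M6). Applying Theorem~1.1 then yields Morita equivalence.

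For the converse --- that Morita equivalence produces an equivalence biset, which is Theorem~2.14 of \cite{FLS} --- I would start from a weak equivalence $F \colon C(S) \to C(T)$ supplied by Theorem~1.1 and reconstruct the biset from it. For the underlying set $X$ I would take the disjoint union, over the idempotents $e$ of $S$, of the arrows of $C(T)$ incident with the object $F(e)$; one of the two actions is then given by composition in $C(T)$ directly, and the other by first applying $F$ to a morphism of $C(S)$ and composing the result in $C(T)$. The $T$-valued pairing $[-,-]$ is read off by composing one such arrow with the inverse of another inside $C(T)$ --- both categories being inverse categories, each arrow $(e,s,f)$ having the inverse $(f,s^{-1},e)$ --- and the $S$-valued pairing $\langle -,- \rangle$ is then defined by pulling the corresponding composite back along $F$, which is legitimate precisely because $F$ is full and faithful. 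Surjectivity of the two pairings follows from essential surjectivity of $F$ together with fullness, and axioms (M1)--(M6) reduce to associativity of composition and the functoriality of $F$.

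The step I expect to be the main obstacle is verifying the linking axiom (M7), $\langle x,y \rangle z = x[y,z]$, in the converse direction. This is the single axiom that binds the $S$-valued and $T$-valued pairings together, and it is exactly the point at which the fact that $F$ is a genuine \emph{functor} --- preserving composition --- must be combined with faithfulness in order to cancel $F$ and descend an identity from $C(T)$ back to $S$. Getting the bookkeeping of idempotents, restricted products, and the choices implicit in the disjoint union to line up so that (M7) holds on the nose, rather than merely up to isomorphism, is where the real care is required; once it is in place, the remaining axioms and the surjectivity claims are comparatively mechanical.
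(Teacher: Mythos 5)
The paper itself disposes of this theorem by citation: the forward direction is Theorem~5.1 of \cite{S} (an equivalence biset yields a weak equivalence $C(S) \rightarrow C(T)$, and Theorem~1.1 finishes), and the converse is Theorem~2.14 of \cite{FLS}. Your forward direction follows exactly the cited route and is essentially sound, with one small repair needed: surjectivity of $\langle -,- \rangle$ only gives $e = \langle x,y \rangle$, which yields $e \leq \langle x,x \rangle$ rather than equality; you must then pass to $ex$ and check $\langle ex, ex \rangle = e$ (true, using (M1), (M2) and commutation of idempotents) before you can define the functor on objects by $e \mapsto [ex,ex]$.

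The genuine gap is in the converse. Your underlying set is a \emph{disjoint} union, over $e \in E(S)$, of arrows of $C(T)$ incident with $F(e)$, so each element carries its index $e$ as part of its identity. The right $T$-action (composition in $C(T)$) preserves that index, but the left $S$-action and the pairing $\langle -,- \rangle$ cannot: if $x = (e,t)$ where the idempotent $tt^{-1}$ is \emph{strictly} below $F(e)$, then $\langle x,x \rangle$, obtained by pulling $tt^{-1}$ back along $F$, is an idempotent $a \in eSe$ with $a < e$, and $\langle x,x \rangle x$ — computed by applying $F$ to an arrow out of the object $a$ and composing — lands in the summand indexed by $a$, not by $e$. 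Hence (M3) fails on the nose, and the same index shift kills (M7); note that such elements $x$ cannot be excluded, since closure of $X$ under the right $T$-action (and surjectivity of the pairings) forces them to exist. So the construction as stated does not satisfy the axioms: you would have to quotient the disjoint union by a restriction-type (coend) identification and then re-verify that both actions and both pairings descend and that (M1)--(M7) hold on classes — and that verification, not (M7) alone, is the real content you have deferred. This is precisely why \cite{FLS} proves this direction by a different device: from the weak equivalence one builds a joint enlargement, a single inverse semigroup $U$ containing copies of $S$ and $T$, and takes $X = SUT$ with $\langle x,y \rangle = xy^{-1}$ and $[x,y] = x^{-1}y$, whereupon all seven axioms become instances of associativity in $U$.
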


The goal of this paper can now be stated: given an inverse semigroup $S$ how do we construct all inverse semigroups $T$ that
are Morita equivalent to $S$?
We shall show how to do this.
This paper can be seen as a generalization and completion of some of the results to be found in \cite{L1}.

Our main reference for general semigroup theory is Howie \cite{H} and for inverse semigroups Lawson \cite{MVL}.
Since categories play a role, it is worth stressing, to avoid confusion, that a semigroup $S$ is {\em (von Neumann) regular}
if each element $s \in S$ has an {\em inverse} $t$ such that $s = sts$ and $t = tst$.
The set of inverses of $s$ is denoted by $V(s)$.
Inverse semigroups are the regular semigroups in which each element has a unique inverse.

\section{The main construction}

Our main tool will be Rees matrix semigroups.
These can be viewed as the semigroup analogues of matrix rings and, the reader will recall,
matrix rings play an important role in the Morita theory of unital rings \cite{Lam}.

If $S$ is a regular semigroup then a Rees matrix semigroup $M(S;I,\Lambda;P)$ over $S$ need not be regular.
However, we do have the following.

\begin{lemma}[Lemma~2.1 of \cite{M1}] Let $S$ be a regular semigroup.
Let  $RM(S;I,\Lambda;P)$ be the set of regular elements of  $M(S;I,\Lambda;P)$.
Then $RM(S;I,\Lambda;P)$ is a regular semigroup.
\end{lemma}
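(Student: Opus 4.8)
The plan is to work directly with the multiplication of the Rees matrix semigroup and reduce everything to the regularity equation. First I would record that $(i,s,\lambda)(j,t,\mu) = (i, sp_{\lambda j}t, \mu)$, so that a triple $(i,s,\lambda)$ is regular in $M(S;I,\Lambda;P)$ precisely when there exist $j \in I$, $\mu \in \Lambda$ and $t \in S$ with $s = sp_{\lambda j}tp_{\mu i}s$. Since $t$ ranges over all of $S$, this is equivalent to the two one-sided conditions $s \in sp_{\lambda j}S$ and $s \in Sp_{\mu i}s$ for some $j$ and $\mu$; that is, $s\,\mathcal{R}\,sp_{\lambda j}$ and $s\,\mathcal{L}\,p_{\mu i}s$. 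The forward implication is immediate, and for the converse I would use that $S$ is regular: writing $s = ss's$ for an inverse $s'$ of $s$, together with $s = sp_{\lambda j}a$ and $s = bp_{\mu i}s$ for suitable $a,b \in S$, one substitutes to obtain $s = sp_{\lambda j}(as'b)p_{\mu i}s$, which exhibits the required $t = as'b$.

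The heart of the argument is closure under multiplication. Given regular elements $a = (i,s,\lambda)$ and $b = (j,t,\mu)$, I would show that $ab = (i, w, \mu)$ with $w = sp_{\lambda j}t$ is again regular by verifying the characterization above for $w$ relative to the indices $i$ and $\mu$. The $\mathcal{L}$-condition transfers from $a$: since $s\,\mathcal{L}\,p_{\mu_0 i}s$ for some $\mu_0$, writing $s = s'p_{\mu_0 i}s$ gives $w = s'p_{\mu_0 i}w$, so $w \in Sp_{\mu_0 i}w$. Dually, the $\mathcal{R}$-condition transfers from $b$: since $t\,\mathcal{R}\,tp_{\mu j_1}$ for some $j_1$, writing $t = tp_{\mu j_1}t'$ gives $w = wp_{\mu j_1}t'$, so $w \in wp_{\mu j_1}S$.

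The one remaining point, and the place where it is essential that $S$ is a genuinely regular semigroup, is that $w = sp_{\lambda j}t$ must itself be a regular element of $S$. This is automatic: $w$ is simply an element of $S$, and every element of a regular semigroup is regular. Combining this with the two one-sided conditions just verified, the characterization shows that $ab$ is regular, so $RM(S;I,\Lambda;P)$ is closed under multiplication and hence is a subsemigroup of $M(S;I,\Lambda;P)$.

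Finally I would deduce that this subsemigroup is regular. If $a \in RM(S;I,\Lambda;P)$ then $a$ has an inverse $a^{\ast}$ in $M(S;I,\Lambda;P)$; but the defining equations $aa^{\ast}a = a$ and $a^{\ast}aa^{\ast} = a^{\ast}$ are symmetric in $a$ and $a^{\ast}$, so $a$ is an inverse of $a^{\ast}$ and hence $a^{\ast}$ is itself regular, giving $a^{\ast} \in RM(S;I,\Lambda;P)$. Thus every element of $RM(S;I,\Lambda;P)$ has an inverse inside $RM(S;I,\Lambda;P)$, which is exactly what it means for $RM(S;I,\Lambda;P)$ to be regular. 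The only genuinely delicate step is the bookkeeping in the characterization lemma; once that is in place, the closure and regularity both follow quickly, the crucial simplification being that any product of elements of $S$ automatically stays regular.
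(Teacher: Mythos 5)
Your proof is correct. Note that the paper itself gives no proof of this lemma: it is quoted verbatim as Lemma~2.1 of McAlister's paper \cite{M1}, so there is no in-text argument to compare against. What you have written is in fact a faithful reconstruction of McAlister's original argument: the key step in both is the characterization that $(i,s,\lambda)$ is regular in $M(S;I,\Lambda;P)$ if and only if $s \in sp_{\lambda j}S$ for some $j$ and $s \in Sp_{\mu i}s$ for some $\mu$ (equivalently, $s\,\mathcal{R}\,sp_{\lambda j}$ and $s\,\mathcal{L}\,p_{\mu i}s$), with regularity of $S$ used exactly where you use it, namely to splice the two one-sided conditions together via an inverse $s'$ of $s$, and again implicitly when the characterization is applied to the product entry $w = sp_{\lambda j}t$. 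Your transfer of the $\mathcal{L}$-condition from the left factor and the $\mathcal{R}$-condition from the right factor, with the index bookkeeping $p_{\mu_0 i}$ and $p_{\mu j_1}$ matching the outer indices of $ab = (i,w,\mu)$, is exactly right, and the closing observation that an inverse of a regular element is itself regular correctly upgrades closure of $\mathrm{Reg}\,M(S;I,\Lambda;P)$ under multiplication to regularity of the subsemigroup. In short: the proposal is a complete and correct proof, supplying the argument the paper delegates to \cite{M1}.
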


The semigroup $RM(S;I,\Lambda;P)$ is called a {\em regular Rees matrix semigroup} over $S$.
Recall that a {\em local submonoid} of a semigroup $S$ is a subsemigroup of the form $eSe$ where $e$ is an idempotent.
A regular semigroup $S$ is said to be {\em locally inverse} if each local submonoid is inverse.
Regular Rees matrix semigroups over inverse semigroups need not be inverse, but we do have the following.
The proof follows by showing that each local submonoid of $RM(S;I,\Lambda;P)$ is isomorphic to a local submonoid of $S$.

\begin{lemma}[Lemma~1.1 of \cite{M2}] Let $S$ be an inverse semigroup.
Then a regular Rees matrix semigroup over $S$ is locally inverse.
\end{lemma}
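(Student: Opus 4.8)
The plan is to take an arbitrary idempotent $e$ of the regular Rees matrix semigroup $T = RM(S;I,\Lambda;P)$ and show directly that the local submonoid $eTe$ is isomorphic to a local submonoid of $S$; since every local submonoid of an inverse semigroup is itself inverse, this proves that $T$ is locally inverse. Writing the elements of the full Rees matrix semigroup $M(S;I,\Lambda;P)$ as triples $(i,s,\lambda)$ with product $(i,s,\lambda)(j,t,\mu) = (i, s p_{\lambda j} t, \mu)$, I would first record that $(i,s,\lambda)$ is idempotent precisely when $s p_{\lambda i} s = s$. Fixing such an idempotent $e = (i,s,\lambda)$ and abbreviating $p = p_{\lambda i}$, the relation $sps = s$ immediately forces both $ps$ and $sp$ to be idempotents of $S$; the element $f = ps$ will serve as the identity of the target local submonoid $fSf$.

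Next I would compute $eMe$ explicitly. A short calculation shows that every element of $eMe$ has the same first and third coordinates as $e$, that a triple $(i,w,\lambda)$ lies in $eMe$ exactly when $w = spwps$, and that the induced multiplication is the sandwiched product $(i,w_1,\lambda)(i,w_2,\lambda) = (i, w_1 p w_2, \lambda)$. I would then define
\[
\psi \colon eMe \rightarrow S, \qquad \psi(i,w,\lambda) = pw,
\]
and verify the three required properties. That $\psi$ is a homomorphism is exactly the point at which the sandwich entry $p$ is absorbed, since $\psi(i,w_1 p w_2,\lambda) = pw_1pw_2 = (pw_1)(pw_2)$. The defining relation $w = spwps$ gives $pw = f(pw)f$, so the image lies in $fSf$; the same relation, read as $w = s(pw)(ps)$, yields injectivity; and for $y \in fSf$ the triple $(i,sy,\lambda)$ is checked to lie in $eMe$, using $y(ps) = y$, with $\psi(i,sy,\lambda) = (ps)y = fy = y$, giving surjectivity. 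Hence $\psi$ is an isomorphism from $eMe$ onto the local submonoid $fSf$ of $S$.

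Finally I would reconcile this with the regular semigroup $T = RM(S;I,\Lambda;P)$: since $fSf$ is inverse, every element of $eMe$ is regular and so already lies in $T$, whence $eTe = eMe \cong fSf$. As $fSf$ is inverse, so is $eTe$, and therefore $T$ is locally inverse. The step I expect to require the most care is the bookkeeping around the sandwich entry $p = p_{\lambda i}$ — keeping track of where $p$ multiplies and repeatedly invoking $sps = s$ to collapse the longer products — together with the verification that the local submonoid computed inside the full, possibly non-regular, semigroup $M(S;I,\Lambda;P)$ genuinely coincides with the local submonoid of its regular part $RM(S;I,\Lambda;P)$.
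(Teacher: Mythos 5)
Your proof is correct and takes exactly the route the paper indicates (the paper only sketches it, deferring to McAlister): you show each local submonoid of $RM(S;I,\Lambda;P)$ is isomorphic to a local submonoid $fSf$ of $S$, which is inverse. Your bookkeeping with $p = p_{\lambda i}$, the isomorphism $\psi(i,w,\lambda) = pw$, and the reconciliation $eTe = eMe$ (every element of $eMe$ is regular in $M$ because $eMe \cong fSf$ is regular) are all sound.
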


Regular Rees matrix semigroups over inverse semigroups are locally inverse but not inverse.
To get closer to being an inverse semigroup we need to impose more conditions on the Rees matrix semigroup.
First, we shall restrict our attention to {\em square} Rees matrix semigroups: those semigroups where $I = \Lambda$.
In this case, we shall denote our Rees matrix semigroup by $M(S,I,p)$ where
$p \colon I \times I \rightarrow S$ is the function giving the entries of the sandwich matrix $P$.
Next, we shall place some conditions on the sandwich matrix $P$:

\begin{description}

\item[{\rm (MF1)}] $p_{i,i}$ is an idempotent for all $i \in I$.

\item[{\rm (MF2)}] $p_{i,i}p_{i,j}p_{j,j} = p_{i,j}$.

\item[{\rm (MF3)}] $p_{i,j} = p_{j,i}^{-1}$.

\item[{\rm (MF4)}] $p_{i,j}p_{j,k} \leq p_{i,k}$.

\item[{\rm (MF5)}] For each $e \in E(S)$ there exists $i \in I$ such that $e \leq p_{i,i}$.

\end{description}
We shall call functions satisfying all these conditions {\em McAlister functions}.
Our choice of name reflects the fact that McAlister was the first to study functions of this kind in \cite{M2}.

The following is essentially Theorem~6.7 of \cite{L1} but we include a full proof for the sake of completeness.

\begin{lemma} Let  $M = M(S,I,p)$ where $p$ satisfies (M1)--(M4).

\begin{enumerate}

\item $(i,s,j)$ is regular if and only if $s^{-1}s \leq p_{j,j}$ and $ss^{-1} \leq p_{i,i}$.

\item If $(i,s,j)$ is regular then one of its inverses is $(j,s^{-1},i)$.

\item $(i,s,j)$ is an idempotent if and only if $s \leq p_{i,j}$.
 
\item The idempotents form a subsemigroup.

\end{enumerate}
\end{lemma}
\begin{proof}

(1). Suppose that $(i,s,j)$ is a regular element.
Then there is an element $(k,t,l)$ such that
$(i,s,j) = (i,s,j)(k,t,l)(i,s,j)$ and $(k,t,l) = (k,t,l)(i,s,j)(k,t,l)$.
Thus, in particular, $s = sp_{j,k}tp_{l,i}s$. 
Now
$$p_{j,j}s^{-1}s = p_{j,j}s^{-1}sp_{j,k}tp_{l,i}s = s^{-1}s p_{j,j}p_{j,k}tp_{l,i}s$$
using the fact that $p_{j,j}$ is an idempotent.
But $p_{j,j}p_{j,k} = p_{j,k}$ and so 
$$p_{j,j}s^{-1}s = s^{-1}s p_{j,k}tp_{l,i}s = s^{-1}s.$$
Thus $s^{-1}s \leq p_{j,j}$.
By symmetry, $ss^{-1} \leq p_{i,i}$.

(2) This is a straightforward verification.

(3). Suppose that $(i,s,j)$ is an idempotent.
Then $s = sp_{j,i}s$.
It follows that $s^{-1} = s^{-1}s p_{j,i} ss^{-1} \leq p_{j,i}$ and so $s \leq p_{i,j}$.
Conversely, suppose that $s \leq p_{i,j}$.
Then $s^{-1} \leq p_{j,i}$ and so $s^{-1} = s^{-1}s p_{j,i}ss^{-1}$ which gives $s = sp_{j,i}s$.
This implies that $(i,s,j)$ is an idempotent.

(4). Let $(i,s,j)$ and $(k,t,l)$ be idempotents.
Then by (2) above we have that $s \leq p_{i,j}$ and $t \leq p_{k,l}$.
Now $(i,s,j)(k,t,l) = (i,sp_{j,k}t,l)$.
But $sp_{j,k}t \leq p_{i,j}p_{j,k}p_{k,l} \leq p_{i,l}$.
It follows that $(i,s,j)(k,t,l)$ is an idempotent.
\end{proof}

A regular semigroup is said to be {\em orthodox} if its idempotents form a subsemigroup.
Inverse semigroups are orthodox.
An orthodox locally inverse semigroup is called a {\em generalized inverse semigroup}.
They are the orthodox semigroups whose idempotents form a normal band.

\begin{corollary} Let $S$ be an inverse semigroup.
If $M = M(S,I,p)$ where $p$ satisfies (M1)--(M4) then $RM(S,I,p)$ is a generalized inverse semigroup.
\end{corollary}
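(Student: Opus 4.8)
The plan is to verify directly that $RM(S,I,p)$ satisfies the two requirements in the definition of a generalized inverse semigroup, namely that it is locally inverse and that it is orthodox. Since $S$ is inverse it is in particular regular, so by Lemma~2.1 the set $RM(S,I,p)$ of regular elements of $M(S,I,p)$ is itself a regular semigroup, and in particular a subsemigroup of $M = M(S,I,p)$; thus only local inverseness and orthodoxy remain to be checked.

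Local inverseness is immediate. Since $RM(S,I,p)$ is a regular Rees matrix semigroup over the inverse semigroup $S$, it is locally inverse by Lemma~2.2, and no further work is needed here.

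The substance of the argument is orthodoxy, and most of the work has already been done in part~(4) of the preceding lemma --- but that statement concerns the idempotents of the \emph{full} Rees matrix semigroup $M$, whereas we must reason inside the regular subsemigroup $RM(S,I,p)$. First I would observe that an idempotent is always a regular element, being its own inverse, so every idempotent of $M$ lies in $RM(S,I,p)$; conversely, because $RM(S,I,p)$ is a subsemigroup of $M$ with the restricted multiplication, an element of $RM(S,I,p)$ is idempotent there precisely when it is idempotent in $M$. Hence the idempotents of $RM(S,I,p)$ and of $M$ coincide. By part~(4) the product of two idempotents of $M$ is again an idempotent of $M$; being idempotent it is regular, so it lies back in $RM(S,I,p)$. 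Therefore the idempotents of $RM(S,I,p)$ are closed under multiplication, i.e.\ $RM(S,I,p)$ is orthodox, and together with local inverseness this exhibits it as a generalized inverse semigroup.

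The only point demanding care --- and the step I would flag as the main, if mild, obstacle --- is precisely this transfer between $M$ and its regular part: one must confirm that the set of idempotents does not change on passing to the subsemigroup and that the product furnished by part~(4) genuinely returns to $RM(S,I,p)$. Once the identification of the idempotents of $RM(S,I,p)$ with those of $M$ is in place, the remainder is routine bookkeeping resting only on Lemmas~2.1 and~2.2 and on part~(4) of the lemma above.
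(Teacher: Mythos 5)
Your proof is correct and takes essentially the approach the paper intends: the corollary is stated there without proof precisely because it follows immediately from Lemma~2.2 (local inverseness) together with Lemma~2.3(4) (orthodoxy) and the definition of a generalized inverse semigroup. Your additional care in checking that the idempotents of $RM(S,I,p)$ coincide with those of $M(S,I,p)$ is sound, routine bookkeeping that the paper leaves implicit.
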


Let $S$ be a regular semigroup.
Then the intersection of all congruences $\rho$ on $S$ such $S/\rho$ is inverse is a congruence denoted by $\gamma$;
it is called the {\em minimum inverse congruence}.

\begin{lemma}[Theorems~6.2.4 and 6.2.5 of \cite{H}] Let $S$ be an orthodox semigroup.
Then the following are equivalent:
\begin{enumerate}

\item $s \, \gamma \, t$.

\item $V(s) \cap V(t) \neq \emptyset$.

\item $V(s) = V(t)$. 

\end{enumerate}
\end{lemma}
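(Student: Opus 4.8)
The plan is to prove the cycle (3) $\Rightarrow$ (2) $\Rightarrow$ (1) $\Rightarrow$ (3), which isolates the single implication that genuinely uses orthodoxy. The implication (3) $\Rightarrow$ (2) is immediate: since $S$ is regular, $V(s)$ is nonempty, so if $V(s) = V(t)$ then $V(s) \cap V(t) = V(s) \neq \emptyset$.

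For (2) $\Rightarrow$ (1) I would argue inside the quotient $S/\gamma$, which is inverse by the definition of $\gamma$. If $x \in V(s) \cap V(t)$, then applying the natural map $\gamma^{\natural}$ shows that $x\gamma$ is an inverse of both $s\gamma$ and $t\gamma$ in $S/\gamma$. Since inverses are unique in an inverse semigroup, $s\gamma$ and $t\gamma$ are both the unique inverse of $x\gamma$, whence $s\gamma = t\gamma$; that is, $s \, \gamma \, t$. This step needs nothing beyond uniqueness of inverses in the inverse semigroup $S/\gamma$.

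The substance is (1) $\Rightarrow$ (3). Here I would introduce the relation $\eta$ on $S$ defined by $s \mathrel{\eta} t$ if and only if $V(s) = V(t)$, and show that $\eta$ is an inverse congruence; minimality of $\gamma$ then forces $\gamma \subseteq \eta$, so that $s \, \gamma \, t$ implies $V(s) = V(t)$. Two ingredients are needed, and both rely on orthodoxy. The first is a band lemma, proved directly inside $S$ by repeatedly using that products of idempotents are idempotents, asserting that a common inverse of two elements already forces their inverse sets to coincide; this is precisely the content of (2) $\Rightarrow$ (3) and it lets me treat $\eta$ interchangeably as the relation $V(s) \cap V(t) \neq \emptyset$. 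Granting this, compatibility of $\eta$ with multiplication reduces to showing that a common inverse of $s$ and $t$ produces a common inverse of $cs$ and $ct$, and of $sc$ and $tc$, for every $c$. Once $\eta$ is known to be a congruence, $S/\eta$ is regular as a homomorphic image of a regular semigroup, and its idempotents commute: by Lallement's Lemma every idempotent of $S/\eta$ is the image of an idempotent of $S$, and for idempotents $e,f$ of $S$ the elements $ef$ and $fe$ of the band $E(S)$ satisfy $fe \in V(ef) \cap V(fe)$, so the band lemma gives $V(ef) = V(fe)$ and hence $(ef)\eta = (fe)\eta$. A regular semigroup with commuting idempotents is inverse, so $\eta$ is an inverse congruence, as required.

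The main obstacle is the compatibility of $\eta$ with multiplication: unlike the other steps it cannot be read off from formal properties of regular or inverse semigroups and instead requires genuine manipulation inside the band $E(S)$, exploiting that $S$ is orthodox. I expect the cleanest route to be to first nail down the exchange identities relating two inverses of the same element and the behaviour of the associated idempotents under products, and then to feed these into the computation showing $V(cs) \cap V(ct) \neq \emptyset$.
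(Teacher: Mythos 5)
First, a point of comparison: the paper does not prove this lemma at all --- it is quoted from Howie (Theorems~6.2.4 and 6.2.5 of \cite{H}) --- so your attempt can only be measured against the proof in that reference, which your outline essentially reconstructs. The logical architecture you set up is sound and non-circular: (3)$\Rightarrow$(2) is trivial; your (2)$\Rightarrow$(1), passing to the inverse quotient $S/\gamma$ and using uniqueness of inverses there, is correct and needs no orthodoxy; and your route to (1)$\Rightarrow$(3), namely showing that $\eta=\{(s,t): V(s)=V(t)\}$ is a congruence with inverse quotient and then invoking minimality of $\gamma$, is the standard one. The supporting details you do give (Lallement's Lemma to lift idempotents of $S/\eta$, and $fe\in V(ef)\cap V(fe)$ in the band $E(S)$ to make idempotents of $S/\eta$ commute) are also right.

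The genuine gap is that the two facts carrying the entire orthodoxy content are named but never proved, and nothing you have established yields them: (i) the ``band lemma'' that $V(s)\cap V(t)\neq\emptyset$ implies $V(s)=V(t)$, which you need both as the implication (2)$\Rightarrow$(3) and in order to treat $\eta$ as the nonempty-intersection relation; and (ii) the product rule for inverses in an orthodox semigroup, that $s'\in V(s)$ and $c'\in V(c)$ imply $s'c'\in V(cs)$, which is what actually produces the common inverse $xc'$ of $cs$ and $ct$ in your compatibility step. Fact (ii) is one of the standard characterizations of orthodoxy, and fact (i) is a further nontrivial computation; together they are precisely the content of Theorems~6.2.4 and 6.2.5 of \cite{H}, i.e., the substance of the statement being proved. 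Your closing paragraph concedes that these manipulations remain to be carried out, so as it stands you have a correct and well-organized plan wrapped around an unproved core rather than a proof; completing it means performing those two computations in $S$, and they require genuine work with the band $E(S)$, not merely the observation that products of idempotents are idempotents.
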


\begin{lemma} Let  $RM = RM(S,I,p)$ where $p$ satisfies (M1)--(M4).
Then $(i,s,j) \gamma (k,t,l)$ if and only if $s = p_{i,k}tp_{l,j}$ and $t = p_{k,l}sp_{j,l}$.
\end{lemma}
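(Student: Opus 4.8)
The plan is to reduce the statement to a concrete question about inverses and then to a purely algebraic identity in $S$. By the preceding corollary $RM$ is a generalized inverse semigroup, in particular orthodox, so the recalled characterization of the minimum inverse congruence applies: $x \, \gamma \, y$ if and only if $V(x) = V(y)$, equivalently $V(x) \cap V(y) \neq \emptyset$. By part~(2) of the lemma on regular elements, $(j,s^{-1},i)$ is a distinguished inverse of $(i,s,j)$. Hence if $(i,s,j) \, \gamma \, (k,t,l)$ then $V((i,s,j)) = V((k,t,l))$ forces $(j,s^{-1},i) \in V((k,t,l))$, while conversely $(j,s^{-1},i) \in V((k,t,l))$ places this element in $V((i,s,j)) \cap V((k,t,l))$, giving $(i,s,j) \, \gamma \, (k,t,l)$. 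Everything thus reduces to deciding when $(j,s^{-1},i)$ is an inverse of $(k,t,l)$. Writing out the two defining products and cancelling the (automatically matching) outer indices, this amounts to the pair of identities in $S$
\[
t p_{l,j} s^{-1} p_{i,k} t = t
\quad\text{and}\quad
s^{-1} p_{i,k} t p_{l,j} s^{-1} = s^{-1},
\]
which I call (a) and (b). The goal is to show these are jointly equivalent to $s = p_{i,k} t p_{l,j}$ together with its symmetric counterpart $t = p_{k,i} s p_{j,l}$.

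For the direction from the sandwich identities to (a) and (b), equation (b) is immediate: substituting $s = p_{i,k} t p_{l,j}$ collapses its left-hand side to $s^{-1} s s^{-1} = s^{-1}$. For (a) I would first feed each sandwich identity into the other to obtain the consistency relations $s = (p_{i,k} p_{k,i}) s (p_{j,l} p_{l,j})$ and $t = (p_{k,i} p_{i,k}) t (p_{l,j} p_{j,l})$; by (MF3) each bracketed factor has the form $p\,p^{-1}$ and is therefore idempotent, so the second relation reads $t = h t g$ with $h = p_{k,i} p_{k,i}^{-1}$ and $g = p_{l,j} p_{l,j}^{-1}$. This yields $h t = t = t g$, and substituting $s^{-1} = p_{j,l} t^{-1} p_{k,i}$ into the left-hand side of (a) rewrites it as $t g t^{-1} h t = (t g) t^{-1} (h t) = t t^{-1} t = t$.

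For the converse I would exploit the natural partial order. Multiplying (b) on both sides by $s$ gives $(s s^{-1})(p_{i,k} t p_{l,j})(s^{-1} s) = s$, so $s \leq p_{i,k} t p_{l,j}$; the dual manipulation of (a), after taking inverses and using (MF3), gives $t \leq p_{k,i} s p_{j,l}$. Feeding the second inequality into the first and using monotonicity of multiplication, together with (MF4) in the form $p_{i,k} p_{k,i} \leq p_{i,i}$ and $p_{j,l} p_{l,j} \leq p_{j,j}$ and the identities $p_{i,i} s = s = s p_{j,j}$ (valid because $s s^{-1} \leq p_{i,i}$, $s^{-1} s \leq p_{j,j}$ and idempotents commute in $S$), I obtain $p_{i,k} t p_{l,j} \leq p_{i,i} s p_{j,j} = s$. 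Combining the two inequalities forces $s = p_{i,k} t p_{l,j}$, and symmetrically $t = p_{k,i} s p_{j,l}$.

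The main obstacle is this final squeezing argument. The inverse conditions (a) and (b) only deliver inequalities in the natural partial order, and upgrading them to genuine equalities requires a careful deployment of (MF4) and of the regularity constraints $s s^{-1} \leq p_{i,i}$, $s^{-1} s \leq p_{j,j}$ to guarantee that the diagonal idempotents \emph{absorb} rather than truncate the products. The reduction in the first paragraph is conceptually the crucial move, but it is the interplay between the partial order and the McAlister conditions (MF3) and (MF4) that carries the real weight of the proof.
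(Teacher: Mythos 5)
Your proof is correct and follows essentially the same route as the paper's: both reduce the claim, via the orthodox characterization of $\gamma$ (same inverse sets), to deciding when $(j,s^{-1},i)$ is an inverse of $(k,t,l)$, then in one direction extract the inequalities $s \leq p_{i,k}tp_{l,j}$ and $t \leq p_{k,i}sp_{j,l}$ and squeeze using (MF4) together with $p_{i,i}sp_{j,j}=s$, and in the other verify the inverse equations directly from the sandwich identities. The only (cosmetic) difference is in that last verification, where the paper simply observes $p_{l,j}s^{-1}p_{i,k} = (p_{k,i}sp_{j,l})^{-1} = t^{-1}$, while you route through the consistency relations $ht = t = tg$; both are valid one-line computations.
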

\begin{proof} Lemma~2.5 forms the backdrop to this proof.
Suppose that $(i,s,j) \gamma (k,t,l)$.
Then the two elements have the same sets of inverses.
Now $(j,s^{-1},i)$ is an inverse of $(i,s,j)$ and so by assumption it is an inverse of $(k,t,l)$.
Thus 
$$t = tp_{l,j}s^{-1}p_{i,k}t \text{ and } s^{-1} = s^{-1}p_{i,k}tp_{l,j}s^{-1}.$$
It follows that
$$s \leq p_{i,k}tp_{l,j} \text{ and }t^{-1} \leq p_{l,j}s^{-1}p_{i,k}$$
so that
$$t \leq p_{k,i}sp_{j,l}.$$
Now
$$s \leq  p_{i,k}tp_{l,j} \leq p_{i,k}p_{k,i}sp_{j,l}p_{l,j} \leq p_{i,i}sp_{j,j} = s.$$
Thus $s = p_{i,k}tp_{l,j}$.
Similarly, $t = p_{k,i}sp_{j,l}$.

Conversely, suppose that  $s = p_{i,k}tp_{l,j}$ and $t = p_{k,l}sp_{j,l}$.
We shall prove that $V(i,s,j) \cap V(k,t,l) \neq \emptyset$.
To do this, we shall prove that $(j,s^{-1},i)$ is an inverse of $(k,t,l)$.
We calculate
$$tp_{l,j}s^{-1}p_{i,k}t = t(p_{i,j}s^{-1}p_{i,k})t = t(p_{k,i}sp_{j,l})^{-1}t = tt^{-1}t = t.$$
Similarly, $s^{-1} = s^{-1}p_{i,k}tp_{l,j}s^{-1}$.
The result now follows.\end{proof}

With the assumptions of the above lemma, put 
$$IM(S,I,p) = RM(S,I,p)/\gamma.$$
We call $IM(S,I,p)$ the {\em inverse Rees matrix semigroup} over $S$.

A homomorphism $\theta \colon S \rightarrow T$ between semigroups with local units is said to be a {\em local isomorphism}
if the following two conditions are satisfied:
\begin{description}

\item[{\rm (LI1)}] $\theta \mid eSf \colon eSf \rightarrow \theta (e) T \theta (f)$ is an isomorphism for all idempotents $e,f \in S$.

\item[{\rm (LI2)}] For each idempotent $i \in T$ there exists an idempotent $e \in S$ such that $i \mathcal{D} \theta (e)$.

\end{description}
This definition is a slight refinement of the one given in \cite{L3}.

\begin{lemma} Let $\theta \colon S \rightarrow T$ be a surjective homomorphism between regular semigroups.
Then $\theta$ is a local isomorphism if and only if 
$\theta \mid eSe \colon eSe \rightarrow \theta (e) T \theta (e)$ is an isomorphism for all idempotents $e \in S$.
\end{lemma}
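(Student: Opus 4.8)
The forward implication is immediate: if $\theta$ is a local isomorphism then (LI1) with $f = e$ gives precisely that each $\theta \mid eSe$ is an isomorphism. For the converse I assume that $\theta \mid eSe \colon eSe \to \theta(e)T\theta(e)$ is an isomorphism for every idempotent $e$, and I must establish (LI1) for all pairs of idempotents and (LI2).

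For (LI1), surjectivity of $\theta \mid eSf$ onto $\theta(e)T\theta(f)$ is free: given $\theta(e)t\theta(f)$ with $t \in T$, surjectivity of $\theta$ lets me write $t = \theta(u)$, whence $\theta(e)t\theta(f) = \theta(euf)$ and $euf \in eSf$. The substance is injectivity. Suppose $a,b \in eSf$ with $\theta(a) = \theta(b)$. Since $a = eaf$, any inverse of $a$ can be trimmed to an inverse lying in $fSe$; so I choose $a' \in V(a) \cap fSe$ and $b' \in V(b) \cap fSe$. Then $aa', ba' \in eSe$ and $a'a, a'b \in fSf$, and applying $\theta$ together with $\theta(a)=\theta(b)$ shows that these pairs have equal images. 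The hypothesis that $\theta$ is injective on $eSe$ and on $fSf$ then yields the four diagonal identities $aa' = ba'$, $a'a = a'b$, $bb' = ab'$ and $b'b = b'a$. A short manipulation using these identities together with the inverse relations $a = aa'a$ and $b = bb'b$ (for instance, first deriving $ba'b = a$ and $a = ba'b$, then substituting into $b = ab'a$) collapses to $a = b$.

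For (LI2), let $i$ be an idempotent of $T$. Using surjectivity of $\theta$ choose $s \in S$ with $\theta(s) = i$, and pick an inverse $s' \in V(s)$, which exists since $S$ is regular. Then $e = ss'$ is an idempotent of $S$ with $s \mathrel{\mathcal{R}} e$, and since homomorphisms preserve the relation $\mathcal{R}$ we obtain $\theta(e) \mathrel{\mathcal{R}} \theta(s) = i$; in particular $i \mathrel{\mathcal{D}} \theta(e)$, which is (LI2).

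The only genuine difficulty is the injectivity step of (LI1); everything else is formal. The key idea there is to force the relevant products into the diagonal local submonoids $eSe$ and $fSf$ by selecting the inverses $a'$ and $b'$ in the off-diagonal set $fSe$, so that the single hypothesis on diagonal blocks can be brought to bear; once the four diagonal identities are in hand, the final collapse to $a = b$ is a routine calculation.
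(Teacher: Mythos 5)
Your proof is correct, and it differs from the paper's in an instructive way: the paper's proof is only two sentences long because it outsources the entire substance. The paper notes that (LI2) is automatic from surjectivity (essentially your argument: given $i \in E(T)$, pick $s$ with $\theta(s)=i$, then $e = ss'$ is idempotent, $s \mathrel{\mathcal{R}} e$, and homomorphisms preserve $\mathcal{R}$, so $i \mathrel{\mathcal{D}} \theta(e)$), and then simply cites Lemma~1.3 of McAlister \cite{M2} for the key claim that bijectivity on the diagonal blocks $eSe$ propagates to all blocks $eSf$; incidentally, the paper's sentence ``we need only prove that (LI2) follows'' is a typo for (LI1). You instead prove that propagation directly: surjectivity of $\theta\mid eSf$ is free since $\theta(e)t\theta(f) = \theta(euf)$ with $t = \theta(u)$, and injectivity follows by trimming inverses into the off-diagonal set ($fxe \in V(a)$ whenever $x \in V(a)$ and $a \in eSf$, a standard fact) so that $aa', ba', bb', ab' \in eSe$ and $a'a, a'b, b'b, b'a \in fSf$, where the diagonal hypothesis applies. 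The four identities do collapse to $a = b$, though your parenthetical recipe is garbled (``deriving $ba'b = a$ and $a = ba'b$'' names the same equation twice); the intended chain is: from $aa' = ba'$ and $a'a = a'b$ one gets $a = aa'a = ba'b$, from $bb' = ab'$ and $b'b = b'a$ one gets $b = bb'b = ab'a$, and then
$$b = ab'a = (ba'b)b'(ba'b) = b(a'b)(a'b) = b(a'a) = ba'a = (aa')a = a.$$
So your argument is self-contained where the paper's is not; what the paper's citation buys is brevity, and what your version buys is that the reader need not consult \cite{M2} -- in effect you have reconstructed the content of McAlister's lemma in the special case needed here.
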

\begin{proof} The homomorphism is surjective and so (LI2) is automatic.
We need only prove that (LI2) follows from the assumption that
$\theta \mid eSe \colon eSe \rightarrow \theta (e) T \theta (e)$ is an isomorphism for all idempotents $e \in S$.
This follows from Lemma~1.3 of \cite{M2}.
\end{proof}

\begin{lemma}[Proposition~1.4 of \cite{M2}] Let $S$ be a regular semigroup.
Then the natural homomorphism from $S$ to $S/\gamma$ is a local isomorphism if and only if $S$ is a generalized inverse semigroup. 
\end{lemma}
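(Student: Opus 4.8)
The plan is to prove both implications by reducing, via Lemma~2.9, to a statement about the restrictions of the natural map $\theta \colon S \to S/\gamma$ to local submonoids. Since $\theta$ is surjective and both $S$ and $S/\gamma$ are regular, Lemma~2.9 tells us that $\theta$ is a local isomorphism if and only if $\theta \mid eSe \colon eSe \to \theta(e)(S/\gamma)\theta(e)$ is an isomorphism for every idempotent $e$. Throughout I use that $S/\gamma$ is inverse, as is standard for the minimum inverse congruence of a regular semigroup. Each restriction $\theta \mid eSe$ is automatically surjective onto $\theta(e)(S/\gamma)\theta(e)$, since $\theta(e)\theta(s)\theta(e) = \theta(ese)$ with $ese \in eSe$, so in every case the real issue is injectivity.

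For the direction assuming $S$ is a generalized inverse semigroup, I would argue as follows. Since $S$ is locally inverse, each $eSe$ is an inverse semigroup, and I must show $\theta \mid eSe$ is injective. Suppose $x, y \in eSe$ with $x \, \gamma \, y$. As $S$ is orthodox I may invoke Lemma~2.5 to conclude $V(x) = V(y)$, where inverses are taken in $S$. Let $x'$ be the unique inverse of $x$ in the inverse semigroup $eSe$; then $x' \in V(x) = V(y)$, and since $x'$ and $y$ both lie in the subsemigroup $eSe$ the relations $yx'y = y$ and $x'yx'=x'$ hold inside $eSe$. Thus $x'$ is an inverse of $y$ in $eSe$, and by uniqueness it equals the inverse $y^{-1}$ of $y$ there, whence $x = (x')^{-1} = (y^{-1})^{-1} = y$. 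Hence $\theta \mid eSe$ is an isomorphism for every $e$, and $\theta$ is a local isomorphism.

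For the converse I assume $\theta$ is a local isomorphism and must recover both defining properties of a generalized inverse semigroup. Local inverseness is the easy half: for each idempotent $e$ the target $\theta(e)(S/\gamma)\theta(e)$ is a local submonoid of the inverse semigroup $S/\gamma$ and hence is itself inverse, so the isomorphism $\theta \mid eSe$ makes $eSe$ inverse. For orthodoxy I would not work in $eSe$ but instead exploit the full strength of condition (LI1): for idempotents $e, f$ the restriction $\theta \mid eSf \colon eSf \to \theta(e)(S/\gamma)\theta(f)$ is injective. Both $ef$ and $efef$ lie in $eSf$, and because $\theta(e)$ and $\theta(f)$ are commuting idempotents in the inverse semigroup $S/\gamma$ the element $\theta(ef) = \theta(e)\theta(f)$ is idempotent, so $\theta(ef) = \theta(efef)$. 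Injectivity of $\theta \mid eSf$ then yields $ef = efef$, i.e. $ef$ is an idempotent, so the idempotents of $S$ form a subsemigroup.

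The main obstacle is the orthodoxy half of the converse, and the key idea that unlocks it is to move outside the local submonoids $eSe$ to the ``off-diagonal'' sets $eSf$: the product $ef$ of two idempotents need not sit inside any $eSe$, but it does sit in $eSf$, and there the injectivity supplied by (LI1) together with the commuting of idempotent images in $S/\gamma$ does the work. A secondary point requiring care is the appeal to Lemma~2.5 in the first direction, which is legitimate only because a generalized inverse semigroup is by definition orthodox.
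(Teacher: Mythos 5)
Your proof is correct, but there is nothing in the paper to compare it against: the paper states this lemma purely as a quotation of Proposition~1.4 of \cite{M2} and supplies no argument of its own. What you have produced is a self-contained proof assembled from the paper's own toolkit, and it holds up. In the forward direction, the reduction lemma you invoke is actually Lemma~2.7, not Lemma~2.9 (a harmless citation slip, since you describe its content exactly); surjectivity of each restriction is immediate from $\theta(e)\theta(s)\theta(e) = \theta(ese)$, and your injectivity argument is sound: an inverse of $x$ computed inside $eSe$ is an inverse in $S$, Lemma~2.5 applies because a generalized inverse semigroup is by definition orthodox, and the equations $yx'y = y$, $x'yx' = x'$ restrict back into the inverse semigroup $eSe$, where uniqueness of inverses forces $x = y$. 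The converse is also sound, and its orthodoxy half contains the one genuinely non-obvious idea: rather than staying inside the local submonoids (which is all that Lemma~2.7 would give you), you use the full strength of (LI1) on the off-diagonal set $eSf$, where both $ef$ and $efef$ live, together with the fact that $\theta(e)$ and $\theta(f)$ are commuting idempotents of the inverse semigroup $S/\gamma$, to force $ef = efef$. Your appeal to the standard facts that $S/\gamma$ is inverse for $S$ regular, and that local submonoids of an inverse semigroup are inverse, is legitimate. In short, your argument is a clean, elementary substitute for the external citation, using only Lemmas~2.5 and~2.7 of the paper.
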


Our next two results bring Morita equivalence into the picture via Theorem~1.1.

\begin{lemma} Let $S$ and $T$ be inverse semigroups.
If $\theta \colon S \rightarrow T$ is a surjective local isomorphism then $S$ and $T$ are Morita equivalent.
\end{lemma}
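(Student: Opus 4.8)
The plan is to invoke Theorem~1.1: it suffices to show that the Cauchy completions $C(S)$ and $C(T)$ are equivalent, and since a functor exhibits an equivalence precisely when it is a weak equivalence, I will produce one. The natural candidate is the functor $\Theta \colon C(S) \to C(T)$ induced by $\theta$, sending an object (idempotent) $e$ to $\theta(e)$ and a morphism $(e,s,f)$ to $(\theta(e),\theta(s),\theta(f))$. Because $\theta$ is a homomorphism carrying idempotents to idempotents, $\Theta$ is readily checked to be well defined: it sends the identity $(e,e,e)$ to $(\theta(e),\theta(e),\theta(e))$ and respects the multiplication $(e,s,f)(f,t,g)=(e,st,g)$. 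It then remains to verify that $\Theta$ is faithful, full and essentially surjective.

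Faithfulness and fullness both fall straight out of (LI1). The key observation is that, for fixed idempotents $e,f \in S$, the set of morphisms from $f$ to $e$ in $C(S)$ is exactly $\{(e,s,f) : s \in eSf\}$, which I identify with $eSf$; likewise the corresponding hom-set in $C(T)$ is identified with $\theta(e)T\theta(f)$. Under these identifications the restriction of $\Theta$ to this hom-set is precisely the map $\theta \mid eSf \colon eSf \to \theta(e)T\theta(f)$, which (LI1) asserts to be a bijection. Injectivity on each hom-set gives faithfulness and surjectivity gives fullness.

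For essential surjectivity I first need the characterization of when two idempotents are isomorphic as objects of the Cauchy completion. I claim that idempotents $i,j$ of $T$ are isomorphic in $C(T)$ if and only if $i \mathrel{\mathcal{D}} j$: if $t^{-1}t = j$ and $tt^{-1} = i$ then $itj = t$, so $(i,t,j)$ is a genuine morphism with two-sided inverse $(j,t^{-1},i)$; conversely a pair of mutually inverse triples $(i,t,j)$, $(j,t',i)$ forces $tt' = i$ and $t't = j$, witnessing $i \mathrel{\mathcal{D}} j$. Granting this, (LI2) supplies for each idempotent $i \in T$ an idempotent $e \in S$ with $i \mathrel{\mathcal{D}} \theta(e) = \Theta(e)$, so every object of $C(T)$ is isomorphic to one in the image of $\Theta$.

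With $\Theta$ shown to be a weak equivalence, $C(S)$ and $C(T)$ are equivalent and Theorem~1.1 delivers the Morita equivalence of $S$ and $T$. The only step needing genuine (though still routine) thought is the essential surjectivity: the content there is not (LI2) itself but the translation of the $\mathcal{D}$-relation into isomorphism of objects in the Cauchy completion, together with checking that the candidate triples really are mutually inverse morphisms. Everything else is bookkeeping that follows mechanically from $\theta$ being both a homomorphism and a local isomorphism.
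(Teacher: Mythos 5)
Your proof is correct, and its skeleton (the induced functor $\Theta(e,s,f)=(\theta(e),\theta(s),\theta(f))$, fullness and faithfulness from (LI1), then Theorem~1.1) matches the paper's. Where you genuinely diverge is in the essential surjectivity step. The paper exploits the hypothesis that $\theta$ is \emph{surjective}: since $S$ and $T$ are inverse, any preimage $s$ of an idempotent $i\in T$ yields an idempotent preimage $ss^{-1}$ (as $\theta(s^{-1})=\theta(s)^{-1}=i$), so every identity $(i,i,i)$ of $C(T)$ is literally in the image of $\Theta$ -- no discussion of isomorphism of objects is needed. You instead ignore surjectivity entirely and use (LI2), together with the observation that two idempotents of $T$ are isomorphic as objects of $C(T)$ exactly when they are $\mathcal{D}$-related; your verification of that equivalence (including the converse, where uniqueness of inverses in $T$ forces $t'=t^{-1}$) is sound. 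The trade-off: the paper's argument is shorter and on-the-nose surjective on objects, while yours proves a formally stronger statement -- any local isomorphism between inverse semigroups, surjective or not, induces a Morita equivalence, since (LI2) is built into the definition. That extra generality is real (it is essentially why condition (LI2) is stated via $\mathcal{D}$ rather than via equality), so your route is a legitimate and arguably more robust alternative, at the cost of the small extra lemma translating $\mathcal{D}$-equivalence into isomorphism in the Cauchy completion.
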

\begin{proof} Define $\Theta \colon C(S) \rightarrow C(T)$ by $\Theta (e,s,f) = (\theta (e), \theta (s), \theta (f))$.
Then $\Theta$ is a functor, and it is full and faithful because $\theta$ is a local isomorphism.
Identities in $C(T)$ have the form $(i,i,i)$ where $i$ is an idempotent in $T$.
Because $\theta$ is surjective and $S$ is inverse there is an idempotent $e \in S$ such that $\theta (e) = i$.
Thus every identity in $C(T)$ is the image of an identity in $C(S)$.
It follows that $\Theta$ is a weak equivalence.
Thus the categories $C(S)$ and $C(T)$ are equivalent and so, by Theorem~1.1, the semigroups $S$ and $T$ are Morita equivalent.
\end{proof}

\begin{lemma} Let  $M = M(S,I,p)$ where $p$ satisfies (MF1)--(MF5).
Then $S$ is Morita equivalent to $RM(S,I,p)$.
\end{lemma}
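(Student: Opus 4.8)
The plan is to invoke Theorem~1.1. Since $S$ is inverse and $RM = RM(S,I,p)$ is regular (Lemma~2.2), both semigroups have local units, so it suffices to prove that the Cauchy completions $C(S)$ and $C(RM)$ are equivalent. By the remark that a weak equivalence exists between two categories if and only if they are equivalent, I would establish this by exhibiting a single functor $F \colon C(S) \to C(RM)$ that is full, faithful and essentially surjective.

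To build $F$ I would first use (MF5) to choose, for each idempotent $e \in E(S)$, an index $i_e \in I$ with $e \leq p_{i_e,i_e}$. On objects set $F(e) = (i_e,e,i_e)$, which is an idempotent of $RM$ by Lemma~2.3(3) since $e \leq p_{i_e,i_e}$. On a morphism $(e,s,f)$ of $C(S)$, so $s = esf$, I would send the underlying element $s$ to the triple $(i_e,s,i_f)$; because $ss^{-1} \leq e \leq p_{i_e,i_e}$ and $s^{-1}s \leq f \leq p_{i_f,i_f}$, Lemma~2.3(1) guarantees $(i_e,s,i_f) \in RM$, and one checks at once that it is a morphism from $F(e)$ to $F(f)$ in $C(RM)$. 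Verifying that $F$ is a functor is then a routine Rees-matrix computation: the key fact used throughout is that $e \leq p_{i_e,i_e}$ forces $p_{i_e,i_e}e = e = ep_{i_e,i_e}$, so the sandwich entries at the chosen diagonal indices act as identities on the relevant elements, giving $(i_e,s,i_f)(i_f,t,i_g) = (i_e,st,i_g)$ and $F(e,e,e) = F(e)$.

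Faithfulness is immediate, since $F$ does not alter the middle coordinate. For fullness I would take an arbitrary morphism $(F(e),(a,w,b),F(f))$ of $C(RM)$ and expand the condition $(a,w,b) = F(e)(a,w,b)F(f)$; this forces $a = i_e$, $b = i_f$ and $w = ewf$, so the morphism is exactly $F(e,w,f)$.

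The main work, and the step I expect to be the genuine obstacle, is essential surjectivity. Every object of $C(RM)$ is an idempotent $\epsilon = (i,s,j)$ of $RM$ (so $s \leq p_{i,j}$, with possibly $i \neq j$), and it must be shown isomorphic in $C(RM)$ to some $F(e)$, equivalently $\epsilon \, \mathcal{D} \, F(e)$ in $RM$. I would argue in two hops with $e = ss^{-1}$. First, since $(j,s^{-1},i)$ is an inverse of $(i,s,j)$ by Lemma~2.3(2) and $s^{-1}s \leq p_{j,j}$ gives $sp_{j,j}s^{-1} = ss^{-1}$, one gets $(i,s,j)(j,s^{-1},i) = (i,ss^{-1},i)$, whence $\epsilon \, \mathcal{R} \, (i,ss^{-1},i)$. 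Second, because $ss^{-1} \leq p_{i,i}$ and $ss^{-1} \leq p_{i_e,i_e}$, the element $(i,ss^{-1},i_e)$ of $RM$ and its inverse $(i_e,ss^{-1},i)$ witness $(i,ss^{-1},i) \, \mathcal{D} \, (i_e,ss^{-1},i_e) = F(e)$. Composing the two gives $\epsilon \, \mathcal{D} \, F(e)$. The delicate points are precisely that $\epsilon$ may be off-diagonal, so one must first descend to the diagonal idempotent $(i,ss^{-1},i)$ before reaching an image of $F$, and that (MF5) is exactly what makes $F$ defined on all of $E(S)$ and what allows the second hop to terminate inside the image of $F$. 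With $F$ shown to be a weak equivalence, Theorem~1.1 yields that $S$ and $RM(S,I,p)$ are Morita equivalent.
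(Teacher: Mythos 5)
Your proof is correct, but it runs in the opposite direction to the paper's and distributes the work differently. The paper constructs a functor $\Psi \colon C(RM(S,I,p)) \rightarrow C(S)$ given by $\Psi[(i,a,j),(i,s,k),(l,b,k)] = (ap_{j,i},sp_{k,l},bp_{k,l})$, i.e.\ it collapses Rees matrix data back into $S$ using the sandwich entries; in that direction fullness and faithfulness are the (light) burden, while essential surjectivity is immediate from (MF5) because every identity $(e,e,e)$ of $C(S)$ is literally the image of the identity $[(i,e,i),(i,e,i),(i,e,i)]$. You instead build $F \colon C(S) \rightarrow C(RM)$ from a choice function $e \mapsto i_e$ supplied by (MF5), which makes fullness and faithfulness essentially trivial but shifts all the work onto essential surjectivity, where you need a genuine Green's-relations argument: first moving an off-diagonal idempotent $(i,s,j)$ to the diagonal idempotent $(i,ss^{-1},i)$ along $\mathcal{R}$, then hopping to $(i_e,ss^{-1},i_e) = F(ss^{-1})$ via the regular element $(i,ss^{-1},i_e)$. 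Your computations here are correct (both hops check out using Lemma~2.3(1),(2) and the inequalities $ss^{-1} \leq p_{i,i}$, $ss^{-1} \leq p_{i_e,i_e}$). Two small points: you implicitly invoke the standard fact that idempotents of a semigroup are $\mathcal{D}$-related if and only if they are isomorphic as objects of the Cauchy completion --- this is folklore and true, but the paper's proof never needs it, since there the identities of $C(S)$ are hit exactly rather than up to isomorphism; and your appeal to ``Lemma~2.2'' for regularity of $RM$ should be to Lemma~2.1 (Lemma~2.2 gives local inverseness). Also note your functor is a section of the paper's: one checks $\Psi \circ F = \mathrm{id}_{C(S)}$, which explains why the two arguments feel complementary. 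What your route buys is a very concrete picture of how $C(S)$ sits inside $C(RM)$ as the ``diagonal'' part; what the paper's route buys is avoiding both the Axiom-of-Choice-flavoured selection $e \mapsto i_e$ and the $\mathcal{D}$-class argument.
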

\begin{proof} We shall construct a weak equivalence from $C(RM(S,I,p))$ to $C(S)$.
By Theorem~1.1 this implies that $S$ is Morita equivalent to $RM(S,I,p)$.
A typical element of $C(RM(S,I,p))$ has the form
$$\mathbf{s} = [(i,a,j),(i,s,k),(l,b,k)]$$
where $(i,s,j)$ is regular and $(i,a,j)$ and $(l,b,k)$ are idempotents and $(i,a,j)(i,s,k)(l,b,k) = (i,s,k)$.
Observe that both $ap_{j,i}$ and $bp_{k,l}$ are idempotents and that $(ap_{j,i})sp_{k,l}(bp_{k,l}) = sp_{k,l}$.
It follows that 
$$(ap_{j,i},sp_{k,l},bp_{k,l})$$
is a well-defined element of $C(S)$.
We may therefore define
$$\Psi \colon C(RM(S,I,p)) \rightarrow C(S)$$
by
$$\Psi[(i,a,j),(i,s,k),(l,b,k)] = (ap_{j,i},sp_{k,l},bp_{k,l}).$$
It is now easy to check that $\Psi$ is full and faithful.
Let $(e,e,e)$ be an arbitrary identity of $C(S)$.
Then $e$ is an idempotent in $S$.
By (MF5), there exists $i \in I$ such that $e \leq p_{i,i}$.
It follows that $(i,e,i)$ is an idempotent in $RM(S,I,p)$.
Thus
$$[(i,e,i),(i,e,i),(i,e,i)]$$
is an identity in $C(RM(S,I,p))$.
But 
$$\Psi [(i,e,i),(i,e,i),(i,e,i)] = (ep_{i,i},ep_{i,i},ep_{i,i}) = (e,e,e).$$
Thus every identity in $C(S)$ is the image under $\Psi$ of an identity in  $C(RM(S,I,p))$.
In particular, $\Psi$ is essentially surjective.\end{proof}

We may summarize what we have found so far in the following result.

\begin{proposition} Let $S$ be an inverse semigroup and let $p \colon I \times I \rightarrow S$ be a McAlister function.
Then $S$ is Morita equivalent to the inverse Rees matrix semigroup $IM(S,I,p)$. 
\end{proposition}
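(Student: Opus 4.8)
The plan is to prove the proposition by transitivity, factoring the desired Morita equivalence through the regular Rees matrix semigroup $RM(S,I,p)$. All three semigroups $S$, $RM(S,I,p)$ and $IM(S,I,p)$ are regular and hence have local units, so Theorem~1.1 identifies Morita equivalence with equivalence of Cauchy completions; since equivalence of categories is transitive, it is enough to establish the two links $S \sim RM(S,I,p)$ and $RM(S,I,p) \sim IM(S,I,p)$ separately and then compose.

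The first link is exactly the preceding result, Lemma~2.10, since a McAlister function satisfies (MF1)--(MF5); nothing new is required there. For the second link I would work with the natural quotient homomorphism $\nu \colon RM(S,I,p) \to RM(S,I,p)/\gamma = IM(S,I,p)$. By Corollary~2.4 the semigroup $RM(S,I,p)$ is a generalized inverse semigroup, so Lemma~2.8 guarantees that $\nu$ is a local isomorphism, and it is plainly surjective. I would then run the argument of Lemma~2.9: the induced functor $C(RM(S,I,p)) \to C(IM(S,I,p))$ sending $(e,s,f)$ to $(\nu(e),\nu(s),\nu(f))$ is full and faithful because $\nu$ is a local isomorphism (condition (LI1)), and it is essentially surjective because each identity $(i,i,i)$ of $C(IM(S,I,p))$ is the image of an identity $(e,e,e)$ obtained by lifting the idempotent $i$ along $\nu$. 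Theorem~1.1 then yields $RM(S,I,p) \sim IM(S,I,p)$, and combining the two links gives the proposition.

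The one delicate point, which I would flag as the main obstacle, is that Lemma~2.9 is stated for a homomorphism between two \emph{inverse} semigroups, whereas here the source $RM(S,I,p)$ is only a generalized inverse semigroup. The inverse hypothesis was used in the proof of Lemma~2.9 solely to lift an idempotent of the codomain to an idempotent of the domain, and this step remains available for a surjective homomorphism of regular semigroups by Lallement's lemma (see Howie \cite{H}). Once this lifting is in hand, the remainder of the proof of Lemma~2.9 applies unchanged, so the second link holds and the chain of Morita equivalences is complete.
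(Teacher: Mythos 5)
Your proof is correct and is essentially the paper's own (implicit) argument: the proposition appears there as a summary of Lemma~2.10 (giving the equivalence of $S$ with $RM(S,I,p)$) combined with Corollary~2.4, Lemma~2.8 and the weak-equivalence argument of Lemma~2.9 (giving the equivalence of $RM(S,I,p)$ with $IM(S,I,p) = RM(S,I,p)/\gamma$), exactly your two links composed by transitivity via Theorem~1.1. Your observation that Lemma~2.9 is stated only for \emph{inverse} semigroups, whereas the domain here is merely a generalized inverse semigroup, identifies a real gap the paper glosses over, and your repair --- lifting idempotents along the quotient by $\gamma$ using Lallement's lemma for regular semigroups --- is precisely the right fix, since that lifting is the only place the inverse hypothesis on the domain was used.
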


\section{The main theorem}

Our goal now is to prove that all inverse semigroups Morita equivalent to $S$ are isomorphic to inverse Rees matrix semigroups $IM(S,I,p)$.
We shall use Theorem~1.2.
We begin with some results about equivalence bisets all of which are taken from \cite{S}.

The following is part of Proposition~2.3 \cite{S}.

\begin{lemma} Let $(S,T,X,\langle -,- \rangle,[-,-])$ be an equivalence biset.
\begin{enumerate}

\item For each $x \in X$ both $\langle x, x \rangle$ and $[x,x]$ are idempotents.

\item $\langle x, y \rangle \langle z, w \rangle = \langle x[y,z], w \rangle$.

\item $[x,y][z,w] = [x, \langle y, z \rangle w]$.

\item $\langle xt, y\rangle = \langle x, yt^{-1}\rangle$.

\item $[sx,y] = [x,s^{-1}y]$.

\end{enumerate}
\end{lemma}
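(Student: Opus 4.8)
The plan is to derive all five identities directly from the axioms (M1)--(M7), treating parts (1)--(3) as immediate consequences and isolating a single cancellation principle that will be needed for parts (4) and (5).

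For part (1), I would observe that (M3) gives $\langle x,x\rangle x = x$; applying the pairing $\langle -,x\rangle$ to both sides and using (M1) yields $\langle x,x\rangle = \langle \langle x,x\rangle x, x\rangle = \langle x,x\rangle\langle x,x\rangle$, so $\langle x,x\rangle$ is idempotent. The claim for $[x,x]$ is the mirror image: start from (M6), apply $[x,-]$, and use (M4). Parts (2) and (3) are then one-line computations. For (2), (M1) rewrites the product as $\langle x,y\rangle\langle z,w\rangle = \langle\langle x,y\rangle z, w\rangle$, and (M7) replaces $\langle x,y\rangle z$ by $x[y,z]$, giving $\langle x[y,z],w\rangle$. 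Part (3) is dual: (M4) gives $[x,y][z,w]=[x,y[z,w]]$ and (M7) rewrites $y[z,w]$ as $\langle y,z\rangle w$.

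The substance of the lemma lies in (4) and (5), where a factor must be transported across a pairing and then cancelled. My strategy is to show that the two sides of (4) act identically on every element of $X$ and then cancel. For any $z\in X$, (M7) gives $\langle xt,y\rangle z = (xt)[y,z] = x\bigl(t[y,z]\bigr)$, while on the other side (M5) and (M4) combine to give $[yt^{-1},z] = t[y,z]$, so that $\langle x,yt^{-1}\rangle z = x[yt^{-1},z] = x\bigl(t[y,z]\bigr)$ as well. Hence $\langle xt,y\rangle z = \langle x,yt^{-1}\rangle z$ for all $z$. Part (5) is handled symmetrically, using the form $x[y,z]=\langle x,y\rangle z$ of (M7) to show $w[sx,y] = \langle w,x\rangle(s^{-1}y) = w[x,s^{-1}y]$ for every $w\in X$.

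The main obstacle, and the only non-formal step, is the cancellation itself: from $sz = s'z$ for all $z\in X$ I must conclude $s=s'$ (and dually on the other side). I would bridge this by applying $\langle -,w\rangle$ and invoking (M1) to promote the equality to $s\langle z,w\rangle = s'\langle z,w\rangle$ for all $z,w$; since $\langle -,-\rangle$ is surjective onto $S$, this says $sr=s'r$ for every $r\in S$. The proof then reduces to the fact that an inverse semigroup is right reductive, which I would verify directly: taking $r=s^{-1}s$ and $r=s'^{-1}s'$ gives $s=s'(s^{-1}s)$ and $s'=s(s'^{-1}s')$, and a short calculation using the commutativity of idempotents collapses these to $s=s'$. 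The dual statement, that $T$ is left reductive, finishes (5).
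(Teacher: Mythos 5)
Your proof is correct, but there is nothing in the paper to compare it against: the paper states this lemma as ``part of Proposition~2.3 of \cite{S}'' and gives no argument, deferring entirely to Steinberg's paper. What you have produced is therefore a self-contained derivation from (M1)--(M7) and the biset axioms, and it holds up. Parts (1)--(3) are the routine applications of (M1)/(M4) together with (M7), exactly as you say. The genuine content is your handling of (4) and (5): showing the two sides act identically on $X$ (for (4), via $\langle xt,y\rangle z = x(t[y,z])$ and $[yt^{-1},z] = ([z,y]t^{-1})^{-1} = t[y,z]$), then promoting $sz = s'z$ for all $z \in X$ to $sr = s'r$ for all $r \in S$ using (M1) and the surjectivity of $\langle -,-\rangle$ (which is part of the definition of equivalence biset here), and finally invoking reductivity of inverse semigroups. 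That last step is the crux and your verification of it is sound: taking $r = s^{-1}s$ and $r = s'^{-1}s'$ gives $s = s'(s^{-1}s)$ and $s' = s(s'^{-1}s')$, whence $s = s(s'^{-1}s')(s^{-1}s) = s(s^{-1}s)(s'^{-1}s') = s(s'^{-1}s') = s'$ by commutativity of idempotents; equivalently, $s \leq s'$ and $s' \leq s$ in the natural partial order. One small elision: in (5), the step $w[sx,y] = \langle w,x\rangle (s^{-1}y)$ is not (M7) alone; it needs $\langle w,sx\rangle = (s\langle x,w\rangle)^{-1} = \langle w,x\rangle s^{-1}$, i.e.\ (M2) and (M1) plus the anti-automorphism property of inversion --- but this is precisely the mirror of the (M5)/(M4) computation you spelled out for (4), so the appeal to symmetry is legitimate.
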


\begin{lemma} Let $(S,T,X,\langle,\rangle,[,])$ be an equivalence biset from $S$ to $T$.
\begin{enumerate}

\item For each $x \in X$ there exists a homomorphism $\epsilon_{x} \colon E(S) \rightarrow E(T)$ such that
$ex = x\epsilon_{x}(e)$ for all $e \in E(S)$.

\item  For each $x \in X$ there exists a homomorphism $\eta_{x} \colon E(S) \rightarrow E(T)$ such that
$xf = \eta_{x}(f)x$ for all $e \in E(S)$.

\end{enumerate}
\end{lemma}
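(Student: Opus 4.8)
The plan is to write down explicit formulas for the two homomorphisms and then verify their defining relations, their idempotency, and their multiplicativity using the identities collected in the preceding lemma together with the fact that, since $S$ and $T$ are inverse, $E(S)$ and $E(T)$ are semilattices and so idempotents commute.

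For part (1) I would set $\epsilon_{x}(e) = [x,ex]$ for each $e \in E(S)$. First I would check the defining relation: by (M7) we have $x[x,ex] = \langle x,x \rangle ex$, and since $\langle x,x \rangle$ is idempotent (part (1) of the preceding lemma) it commutes with $e$ in $S$, so $\langle x,x \rangle ex = e\langle x,x \rangle x = ex$ by (M3); thus $x\epsilon_{x}(e) = ex$. To see that $\epsilon_{x}(e) \in E(T)$ I would apply part (3) of the preceding lemma, which gives $[x,ex][x,ex] = [x, \langle ex,x \rangle ex]$, and then simplify using (M1), the commuting of idempotents in $S$, and (M3): $\langle ex,x \rangle ex = e\langle x,x \rangle e x = ex$, whence $\epsilon_{x}(e)^{2} = \epsilon_{x}(e)$. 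The homomorphism property is the same computation with two different idempotents: part (3) of the preceding lemma yields $\epsilon_{x}(e)\epsilon_{x}(f) = [x, \langle ex,x \rangle fx]$, and $\langle ex,x \rangle fx = ef\langle x,x \rangle x = (ef)x$, so $\epsilon_{x}(e)\epsilon_{x}(f) = [x,(ef)x] = \epsilon_{x}(ef)$.

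For part (2) I would exploit the left-right symmetry of the axioms, which interchanges the roles of $S$ and $T$ and of the two bracket maps; concretely I would set $\eta_{x}(g) = \langle x,xg \rangle$ for $g \in E(T)$, so that $\eta_{x}$ carries $E(T)$ into $E(S)$ and $\eta_{x}(g)x = xg$. The verification is dual to part (1): (M7) gives $\langle x,xg \rangle x = x[xg,x]$, and using (M4) and (M5) one computes $[xg,x] = [x,x]g$, so that $x[xg,x] = (x[x,x])g = xg$ by (M6). Idempotency and multiplicativity then follow from parts (2) and (4) of the preceding lemma: part (2) gives $\eta_{x}(g)\eta_{x}(h) = \langle x[xg,x], xh \rangle = \langle xg,xh \rangle$, and part (4) rewrites $\langle xg,xh \rangle = \langle x, xhg \rangle = \langle x,xgh \rangle = \eta_{x}(gh)$, using that idempotents commute in $T$; taking $h = g$ gives $\eta_{x}(g)^{2} = \eta_{x}(g)$.

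The calculations themselves are routine; the only genuine step is guessing the correct formulas $\epsilon_{x}(e) = [x,ex]$ and $\eta_{x}(g) = \langle x,xg \rangle$, after which everything is forced by (M1)--(M7) and the preceding lemma. The point I expect to matter most is the repeated, and essential, use of commutativity of idempotents in $S$ and in $T$: this is precisely where the hypothesis that $S$ and $T$ are inverse (rather than merely regular) enters, since without it the simplifications $\langle ex,x \rangle ex = ex$ and $[xg,x] = [x,x]g$ — and hence both the idempotency and the homomorphism property — would fail.
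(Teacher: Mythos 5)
Your proof is correct, and at its core it takes the same route as the paper: define the map by a bracket formula and verify the defining relation from (M7), commutativity of idempotents, and (M3). Indeed your formula $\epsilon_{x}(e) = [x,ex]$ coincides with the paper's $\epsilon_{x}(e) = [ex,ex]$, since Lemma~3.1(5) gives $[ex,ex] = [x,e\cdot ex] = [x,ex]$, and your computation $x[x,ex] = \langle x,x\rangle ex = e\langle x,x\rangle x = ex$ is exactly the paper's. Where you genuinely differ is in completeness: the paper does not prove multiplicativity of $\epsilon_{x}$ at all, citing Proposition~2.4 of \cite{S} for it, and it dismisses part (2) with ``by symmetry,'' whereas you derive idempotency of $\epsilon_{x}(e)$ and the identity $\epsilon_{x}(e)\epsilon_{x}(f) = \epsilon_{x}(ef)$ directly from Lemma~3.1(3), and you write out part (2) explicitly with the dual formula $\eta_{x}(g) = \langle x, xg\rangle$, verified via Lemma~3.1(2),(4). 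So your argument is self-contained where the paper's is not, at the cost of a little extra calculation. One further point in your favor: you silently repaired the statement of part (2), which as printed does not typecheck ($\eta_{x}\colon E(S)\rightarrow E(T)$ with $xf = \eta_{x}(f)x$ is inconsistent with the biset structure); your version $\eta_{x}\colon E(T)\rightarrow E(S)$ with $xg = \eta_{x}(g)x$ is the one the paper actually uses later, in the proof of (MF4) in Lemma~3.3, where $\eta_{x}$ is applied to $[y,y]\in E(T)$ to produce an idempotent of $S$.
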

\begin{proof} We prove (1); the proof of (2) follows by symmetry.
Define $\epsilon_{x}$ by $\epsilon_{x}(e) = [ex,ex]$.
By Proposition~2.4 of \cite{S}, this is a semigroup homomorphism.
Next we use the argument from Proposition~3.6 of \cite{S}.
We calculate $x[ex,ex]$ as follows
$$x[ex,ex] = \langle x, ex \rangle ex = \langle x, x \rangle ex = e \langle x, x \rangle x = ex,$$ 
as required.
\end{proof}

\begin{lemma} Let $(S,T,X,\langle,\rangle,[,])$ be an equivalence biset from $S$ to $T$.
Define $p \colon X \times X \rightarrow S$ by $p_{x,y} = \langle x, y \rangle$.
Then $p$ is a McAlister function.
\end{lemma}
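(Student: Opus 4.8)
The plan is to verify the five conditions (MF1)--(MF5) for the function $p_{x,y} = \langle x,y \rangle$, taking the index set to be $X$ itself. Two of these are immediate: (MF1), that $p_{x,x} = \langle x,x \rangle$ is an idempotent, is exactly part~(1) of Lemma~3.1, while (MF3), that $p_{x,y} = p_{y,x}^{-1}$, is just (M2) read after inverting as $\langle x,y \rangle = \langle y,x \rangle^{-1}$. The remaining work is (MF2), (MF4) and (MF5), and I would carry them out in that order, since (MF5) will lean on (MF4).

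For (MF2) I must show $\langle x,x \rangle \langle x,y \rangle \langle y,y \rangle = \langle x,y \rangle$. The left-hand factor is disposed of directly by the product formula of Lemma~3.1(2): $\langle x,x \rangle \langle x,y \rangle = \langle x[x,x], y \rangle = \langle x,y \rangle$, using (M6). For the right-hand factor I would pass to inverses: by (M2) and Lemma~3.1(2), $(\langle x,y \rangle \langle y,y \rangle)^{-1} = \langle y,y \rangle \langle y,x \rangle = \langle y[y,y], x \rangle = \langle y,x \rangle = \langle x,y \rangle^{-1}$, again by (M6). Since an element of an inverse semigroup is determined by its inverse, this yields $\langle x,y \rangle \langle y,y \rangle = \langle x,y \rangle$, and (MF2) follows.

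The heart of the matter is (MF4): $\langle x,y \rangle \langle y,z \rangle \leq \langle x,z \rangle$. I expect this to be the main obstacle, since it is the only condition asserting an inequality rather than an identity, so an idempotent of $S$ must actually be produced (the temptation to shuffle terms by equalities alone leads only in circles). First I would rewrite the left-hand side using Lemma~3.1(2) and then Lemma~3.1(4) as
\[
\langle x,y \rangle \langle y,z \rangle = \langle x[y,y], z \rangle = \langle x, z[y,y] \rangle ,
\]
and abbreviate $g = [y,y]$ and $k = [z,z]$, both idempotents of $T$. The idea is then to exhibit $\langle x, zg \rangle$ as $\langle x,z \rangle$ multiplied on the right by an idempotent of $S$, namely $f = \langle zg, zg \rangle$ (idempotent by Lemma~3.1(1)). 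Computing $\langle x,z \rangle f$ with Lemma~3.1(2) gives $\langle x[z,zg], zg \rangle = \langle x(kg), zg \rangle$, using (M4) to evaluate $[z,zg] = [z,z]g$; then Lemma~3.1(4) slides $kg$ across to the second argument. The decisive simplifications are that idempotents of $T$ commute (as $T$ is inverse) and that $z[z,z] = z$ by (M6); together these collapse the surplus factor and leave $\langle x,z \rangle f = \langle x, zg \rangle$. Hence $\langle x,y \rangle \langle y,z \rangle = \langle x,z \rangle f \leq \langle x,z \rangle$, which is (MF4).

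Finally, (MF5) drops out of surjectivity together with (MF4). Given $e \in E(S)$, surjectivity of $\langle -,- \rangle$ supplies $x,y \in X$ with $\langle x,y \rangle = e$. As $e$ is idempotent, $e = ee^{-1} = \langle x,y \rangle \langle y,x \rangle$ by (M2), and applying (MF4) to the triple $(x,y,x)$ gives $\langle x,y \rangle \langle y,x \rangle \leq \langle x,x \rangle$. Thus $e \leq \langle x,x \rangle = p_{x,x}$, which is precisely (MF5). The only genuinely delicate point in the whole argument is the idempotent book-keeping inside (MF4); everything else is a direct application of the identities of Lemma~3.1 and the axioms (M1)--(M7).
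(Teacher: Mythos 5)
Your proof is correct, but for the key condition (MF4) it takes a genuinely different route from the paper. The paper's proof handles (MF4) by invoking Lemma~3.2: it writes $x[y,y] = \eta_x([y,y])\,x = fx$ for an idempotent $f \in E(S)$, so that by (M1), $\langle x[y,y],z\rangle = f\langle x,z\rangle \leq \langle x,z\rangle$ --- the idempotent is produced on the \emph{left} via the transfer homomorphism that converts idempotents of $T$ acting on one side of $x$ into idempotents of $S$ acting on the other. You avoid Lemma~3.2 (and hence the results of Steinberg's paper that underlie it) entirely: you slide $[y,y]$ across to the second argument with Lemma~3.1(4), and then exhibit the idempotent on the \emph{right}, namely $f = \langle z[y,y], z[y,y]\rangle \in E(S)$, verifying $\langle x,z\rangle f = \langle x, z[y,y]\rangle$ by a computation using Lemma~3.1(2),(4), axioms (M4) and (M6), and commutativity of idempotents in $T$; your bookkeeping there checks out (in particular $(kg)^{-1} = kg$ and $gkg = kg$ since $T$ is inverse, and $z[z,z]=z$ collapses the surplus factor). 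What the paper's approach buys is brevity, since Lemma~3.2 does the work in one line; what yours buys is self-containedness, resting only on the inner-product identities of Lemma~3.1 and the biset axioms. Your treatment of (MF2) also differs cosmetically --- where the paper says the second half ``holds dually,'' you make it explicit via uniqueness of inverses --- and your (MF5) is the same argument as the paper's (surjectivity plus the already-established (MF4)).
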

\begin{proof}

(MF1) holds. By Lemma~3.1(1), $\langle x, x \rangle$ is an idempotent.

(MF2) holds. By Lemma~3.1(2), $\langle x, x \rangle \langle x, y \rangle = \langle x[x,x],y \rangle$.
But $x[x,x] = x$ by (M6), and so  $\langle x, x \rangle \langle x, y \rangle = \langle x,y \rangle$.
The other result holds dually.

(MF3) holds. This follows from (M2).

(MF4) holds. By Lemma~3.1(2), we have that
$\langle x, y \rangle \langle y, z \rangle = \langle x[y,y], z \rangle$.
By Lemma~3.2, we have that $x[y,y] = \eta_{x}([y,y])x = fx$.
Thus
$\langle x[y,y], z \rangle = \langle fx, x \rangle = f \langle x, z \rangle \leq \langle x, z \rangle$.

(MF5) holds. Let $e \in E(S)$.
Then since $\langle -,-\rangle$ is surjective, there exists $x,y \in X$ such that $e = \langle x, y \rangle$.
But then $e = \langle x, y \rangle\langle y, x \rangle \leq \langle x, x \rangle = p_{x,x}$.\end{proof}

\begin{lemma} Let $(S,T,X,\langle,\rangle,[,])$ be an equivalence biset from $S$ to $T$.
Define $p \colon X \times X \rightarrow S$ by $p_{x,y} = \langle x, y \rangle$.
Form the regular Rees matrix semigroup $R = RM(S,X,p)$.
Define $\theta \colon  RM(S,X,p) \rightarrow T$ by $\theta (x,s,y) = [x,sy]$.
Then $\theta$ is a surjective homomorphism with kernel $\gamma$.
\end{lemma}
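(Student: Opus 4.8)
The plan is to establish the three claims — that $\theta$ is a homomorphism, that it is surjective, and that $\ker\theta = \gamma$ — separately, using Lemma~3.1 for the arithmetic of the two brackets and Lemmas~2.3 and~2.7 for the internal structure of $RM(S,X,p)$. The homomorphism property I would verify by direct computation. Since $(x,s,y)(z,u,w) = (x, s\langle y,z\rangle u, w)$, applying $\theta$ gives $[x, s\langle y,z\rangle u w]$. On the other hand $\theta(x,s,y)\theta(z,u,w) = [x,sy][z,uw] = [x, \langle sy,z\rangle uw]$ by Lemma~3.1(3), and $\langle sy,z\rangle = s\langle y,z\rangle$ by (M1), so both sides agree. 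This step is routine and uses only associativity of the $S$-action.

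For surjectivity the only subtlety is exhibiting, for a given $t \in T$, a single regular triple that meets \emph{both} inequalities of Lemma~2.3(1). Using surjectivity of $[-,-]$, I would write $t = [x,y]$ and take $s = \langle x,x\rangle\langle y,y\rangle$. By Lemma~3.1(1) the factors $\langle x,x\rangle$ and $\langle y,y\rangle$ are commuting idempotents, so $s$ is idempotent with $ss^{-1} = s \leq \langle x,x\rangle = p_{x,x}$ and $s^{-1}s = s \leq \langle y,y\rangle = p_{y,y}$; hence $(x,s,y)$ is regular. Then $sy = \langle x,x\rangle(\langle y,y\rangle y) = \langle x,x\rangle y = x[x,y]$ by (M3) and (M7), so $\theta(x,s,y) = [x,x[x,y]] = [x,x][x,y]$ by (M4). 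Finally $[x,x][x,y] = [x,y]$: rewriting $x = \langle x,x\rangle x$ by (M3) and applying Lemma~3.1(5) gives $[x,y] = [x,\langle x,x\rangle y]$, which is $[x,x][x,y]$ by Lemma~3.1(3). Thus $\theta(x,s,y) = t$.

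For the kernel, one inclusion is immediate: $RM(S,X,p)/\ker\theta \cong T$ is inverse, so $\ker\theta$ is an inverse congruence and therefore contains the minimum inverse congruence $\gamma$. The substance is the reverse inclusion $\ker\theta \subseteq \gamma$, which I expect to be the main obstacle, since from the single identity $[x,sy] = [z,uw]$ one must recover both reconstruction equations of Lemma~2.7, namely $s = \langle x,z\rangle u\langle w,y\rangle$ and $u = \langle z,x\rangle s\langle y,w\rangle$, and this is precisely where regularity must be brought in.

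Concretely, for regular triples with $[x,sy] = [z,uw]$ I would argue as follows. By (M1) and Lemma~3.1(2), $\langle x,z\rangle u\langle w,y\rangle = \langle x,z\rangle\langle uw,y\rangle = \langle x[z,uw], y\rangle$. Substituting $[z,uw] = [x,sy]$ and then applying (M7) and (M1) turns this into $\langle x,x\rangle s\langle y,y\rangle$. Meanwhile the regularity inequalities $ss^{-1} \leq \langle x,x\rangle$ and $s^{-1}s \leq \langle y,y\rangle$ yield the absorption identities $\langle x,x\rangle s = s = s\langle y,y\rangle$ in the inverse semigroup $S$, so the expression collapses to $s$; hence $s = \langle x,z\rangle u\langle w,y\rangle$, and the equation for $u$ follows by the symmetric argument using the regularity of $(z,u,w)$. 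With both equations of Lemma~2.7 in hand we conclude $(x,s,y)\,\gamma\,(z,u,w)$, giving $\ker\theta \subseteq \gamma$ and therefore $\ker\theta = \gamma$.
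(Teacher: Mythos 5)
Your proposal is correct, and three of its four pieces (homomorphism, surjectivity, and the inclusion $\ker\theta \subseteq \gamma$) follow essentially the same computations as the paper: the same use of Lemma~3.1(3) and (M1) for multiplicativity, the same witness $(x,\langle x,x\rangle\langle y,y\rangle, y)$ for surjectivity (where you are in fact more careful than the paper, which merely asserts regularity of this triple while you verify it via the regularity criterion and commuting idempotents), and the same bracket manipulations to extract the two reconstruction equations from $[x,sy]=[z,uw]$ using the absorption identities coming from regularity. Where you genuinely diverge is the easy inclusion $\gamma \subseteq \ker\theta$: the paper proves it by a direct computation, starting from the equations $s = \langle x,u\rangle t\langle v,y\rangle$ and $t = \langle u,x\rangle s\langle y,v\rangle$ and showing $[x,sy] = [x,x][u,tv][y,y] \leq [u,tv]$ together with the symmetric inequality, whereas you simply observe that $\theta$ is a surjective homomorphism onto the inverse semigroup $T$, so $\ker\theta$ is a congruence with inverse quotient and therefore contains the minimum inverse congruence $\gamma$ by its very definition. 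Your argument is shorter and more conceptual; the paper's is self-contained at the level of bracket identities but costs several lines. One bookkeeping remark: the characterization of $\gamma$ on the regular Rees matrix semigroup that you invoke (the two reconstruction equations) is Lemma~2.6 of the paper, not Lemma~2.7, which is the lemma on local isomorphisms.
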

\begin{proof} We show first that $\theta$ is a homomorphism.
By definition 
$$(x,s,y)(u,t,v) = (x,s\langle y,u\rangle t,v).$$
Thus 
$$\theta ((x,s,y)(u,t,v)) = [x, s \langle y,u \rangle tv],$$ 
whereas
$$\theta (x,s,y)\theta (u,t,v) = [x,sy][u,tv].$$
By Lemma~3.1(3), we have that
$$[x,sy][u,tv] = [x, \langle sy,u \rangle tv]$$
but by (M1), $\langle sy,u \rangle = s \langle y,u \rangle$.
It follows that $\theta$ is a homomorphism.

Next we show that $\theta$ is surjective.
Let $t \in T$.
Then there exists $(x,y) \in X \times X$ such that $[x,y] = t$.
Consider the element $(x,\langle x, x \rangle \langle y, y \rangle,y)$ of $M(S,I,p)$.
This is in fact an element of $RM(S,X,p)$.
The image of this element under $\theta$ is 
$$[x, \langle x,x \rangle \langle y, y \rangle y] = [x, \langle x,x \rangle y]$$
since $\langle y, y \rangle y = y$.
But by Lemma~3.1(5), we have that
$$[x, \langle x,x \rangle y] = [\langle x,x \rangle x, y] = [x,y] = t,$$ 
as required.

It remains to show that the kernel of $\theta$ is $\gamma$.
Let $(x,s,y),(u,t,v) \in RM(S,X,p)$.
Suppose first that $\theta (x,s,y) = \theta (u,t,v)$.
By definition, $[x,sy] = [u,tv]$.
Then
$$s 
= \langle x, x \rangle s \langle y, y \rangle 
=   \langle x, x \rangle \langle sy, y \rangle
= \langle x[x,sy], y \rangle$$
by Lemma~3.1(2).
But $[x,sy] = [u,tv]$.
Thus 
$$s 
=  \langle x[u,tv], y \rangle
=
\langle x, u \rangle \langle tv,y\rangle = \langle x, u \rangle t \langle v, y \rangle.$$
By symmetry and Lemma~2.6, we deduce that $(x,s,y) \gamma (u,t,v)$.

Suppose now that $(x,s,y) \gamma (u,t,v)$.
Then by Lemma~2.6
$$s = \langle x, u \rangle t \langle v, y \rangle 
\text{ and }
t = \langle u, x \rangle s \langle y, v \rangle.$$ 
Now
$$[x,sy] 
= [x,\langle x, u \rangle t \langle v, y \rangle y]
= [x,\langle x, u \rangle tv [y, y]]
= [u[x,x],tv[y,y]] = [x,x][u,tv][y,y]$$
using Lemma~3.1.
This gives $[x,sy] \leq [u,tv]$. 
A symmetric argument shows that $[u,tv] \leq [x,sy]$.
Hence $[x,sy] = [u,tv]$, as required.
\end{proof}

We may now state our main theorem.

\begin{theorem} Let $S$ be an inverse semigroup.
For each McAlister function $p \colon I \times I \rightarrow S$ the inverse Rees matrix semigroup $IM(S,I,p)$ is Morita equivalent to $S$, 
and every inverse semigroup Morita equivalent to $S$ is isomorphic to one of this form.
\end{theorem}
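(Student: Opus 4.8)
The statement splits into two halves, and the forward implication requires no new work: that $IM(S,I,p)$ is Morita equivalent to $S$ for an arbitrary McAlister function $p$ is precisely Proposition~2.11, which I would simply invoke. All the substance lies in the converse, and my plan is to assemble the results of Section~3.

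For the converse, I would start from an inverse semigroup $T$ that is Morita equivalent to $S$ and first apply Theorem~1.2 to convert this hypothesis into a concrete equivalence biset $(S,T,X,\langle -,-\rangle,[-,-])$ from $S$ to $T$. This translation from a categorical equivalence to an explicit biset is the decisive move, since it supplies an object over which the Rees matrix machinery can operate. The key structural choice is then to take the index set to be $X$ itself and to set $p_{x,y} = \langle x,y\rangle$; by Lemma~3.3 this $p$ is a McAlister function, so $RM(S,X,p)$ is defined, and by Corollary~2.4 it is a generalized inverse semigroup, whence $IM(S,X,p) = RM(S,X,p)/\gamma$ is an inverse Rees matrix semigroup of exactly the form demanded by the theorem.

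It remains to identify this quotient with $T$. Here I would use the homomorphism $\theta(x,s,y) = [x,sy]$ of Lemma~3.4, which is surjective onto $T$ and whose kernel congruence is shown there to be exactly $\gamma$. The fundamental homomorphism theorem for semigroups then gives $RM(S,X,p)/\gamma \cong T$, that is $IM(S,X,p) \cong T$, which is what the converse asserts. Taking $I = X$, every inverse semigroup Morita equivalent to $S$ is thereby exhibited in the required form.

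The genuine difficulty of the theorem --- choosing the right index set, sandwich function and homomorphism, checking that $p$ meets all five McAlister conditions, and above all verifying that the kernel of $\theta$ is precisely $\gamma$ rather than merely contained in it --- has already been discharged in Lemmas~3.3 and~3.4. Granted those, the proof is a brief assembly; the only point I would watch is that Lemma~3.4 establishes the kernel to be $\gamma$ on the nose (via the two inclusions it proves using Lemma~2.6), for it is exactly this that makes the quotient inverse and pins it down as $T$.
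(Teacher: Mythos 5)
Your proof is correct and follows exactly the route the paper intends: the forward direction is Proposition~2.11, and the converse assembles Theorem~1.2, Lemma~3.3 and Lemma~3.4 (surjective homomorphism with kernel $\gamma$), so that $T \cong RM(S,X,p)/\gamma = IM(S,X,p)$. The paper leaves this assembly implicit after stating the theorem, and your write-up supplies precisely that argument, including the correct emphasis on the kernel being $\gamma$ exactly rather than merely containing it.
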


\begin{remarks} \mbox{}

\begin{enumerate}

\item {\em Let $S$ be an inverse monoid and suppose that $p \colon I \times I \rightarrow S$ is a function satisfying (MF1)--(MF5).
Condition (MF5) says that For each $e \in E(S)$ there exists $i \in I$ such that $e \leq p_{i,i}$.
Thus,in particular, there exists $i_{0} \in I$ such that $1 \leq p_{i_{0},i_{0}}$.
But $p_{i_{0},i_{0}}$ is an idempotent and so $1 = p_{i_{0},i_{0}}$.
Suppose now that  $p \colon I \times I \rightarrow S$ is a function satisfying (MF1)--(MF4)
and there exists $i_{0} \in I$ such that  $1 = p_{i_{0},i_{0}}$.
Every idempotent $e \in S$ satisfies $e \leq 1$.
It follows that (MF5) holds.
Thus in the monoid case, the functions  $p \colon I \times I \rightarrow S$  satisfying (MF1)--(MF5)
are precisely what we called {\em normalized, pointed sandwich functions} in \cite{L1}.
Furthermore, the inverse semigroups Morita equivalent to an inverse monoid are precisely the enlargements of that monoid \cite{FLS,L3}.
Thus the theory developed in pages 446--450 of \cite{L1} is the monoid case of the theory we have just developed.}

\item {\em McAlister functions are clearly examples of the manifolds defined by Grandis \cite{G}
and so are related to the approach to sheaves based on Lawvere's paper \cite{Law} and developed by Walters \cite{W}.
See Section~2.8 of \cite{Bor}.}

\item {\em  
The Morita theory of inverse semigroups is initmately connected to the theory of $E$-unitary covers and almost factorizability \cite{L0}.
It has also arisen in the solution of concrete problems \cite{KM}.}

\item {\em In the light of (2) and (3) above, an interesting special case to consider would be where the inverse semigroup is complete and infinitely distributive.}

\end{enumerate}
\end{remarks}



\begin{thebibliography}{99}




\bibitem{Ban} B.~Banaschewski, Functors into categories of $M$-sets,
 {\em Abh. Math. Sem. Univ. Hamburg }{\bf 38} (1972), 49--64.

\bibitem{Bor} F.~Borceux, {\em Handbook of categorical algebra 3}, CUP, 1994.



\bibitem{FLS} J.~Funk, M.~V.~Lawson, B.~Steinberg, Characterizations of Morita equivalence of inverse semigroups, Preprint, 2010.

\bibitem{H} J.~M.~Howie, {\em Fundamentals of semigroup theory}, Clarendon Press, Oxford, 1995.

\bibitem{KM} K.~Kaarli, L.~M\'arki, A characterization of the inverse monoid of bi-congruences of certain algebras,
{\em Internat. J. Algebra Comput.} {\bf 19} (2009), 791--808.

{\em Periodica Math. Hungar.} {\bf 40} (2000), 85--107.

{\em Proc. Edinb. Math. Soc.} {\bf 44} (2001), 173--186.


\bibitem{K} U. Knauer, Projectivity of acts and Morita equivalence of monoids, {\em Semigroup Forum }{\bf 3} (1972), 359--370.

\bibitem{G} M.~Grandis, Cohesive categories and manifolds, {\em Annali di Matematica pura ed applicata} {\bf CLVII} (1990), 199--244.
Errata corrige, {\em ibid} {\bf 179} (2001), 471--472.



\bibitem{Lam} T.~Y.~Lam, {\em Lectures on rings and modules}, Springer-Verlag, New York, 1999.

\bibitem{L0} M.~V.~Lawson, Almost factorisable inverse semigroups, {\em Glasgow Math. J.} {\bf 36} (1994), 97--111.  

\bibitem{L1} M.~V.~Lawson, Enlargements of regular semigroups, {\em Proc. Edinb. Math. Soc.} {\bf 39} (1996), 425--460.

\bibitem{MVL} M.~V.~Lawson, Inverse semigroups, World Scientific, 1998.


\bibitem{L3} M.~V.~Lawson, Morita equivalence of semigroups with local units, to appear in {\em J.~Pure and Applied Algebra}.

\bibitem{L4} M.~V.~Lawson, Generalized heaps, inverse semigroups and Morita equivalence, accepted by {\em Algebra Universalis}.

\bibitem{LawM} M.~V.~Lawson, L.~M\'arki, Enlargements and coverings by Rees matrix semigroups,
{\em Monatsh. Math.} {\bf 129} (2000), 191--195.

\bibitem{Law} F.~W.~Lawvere, Metric spaces, generalized logic and closed categories, {\em Rend. Sem. Mat. e Fis. di Milano}
{\bf 43} (1973), 135--166.





\bibitem{M1} D.~B.~McAlister, Regular Rees matrix semigroups and regular Dubreil-Jacotin semigroups,
{\em J.~Aust. Math. Soc. (Series A)} {\bf 31} (1981), 325--336.

\bibitem{M2} D.~B.~McAlister, Rees matrix covers for locally inverse semigroups,
{\em Trans Amer. Math. Soc.} {\bf 277} (1983), 727--738.









\bibitem{S} B.~Steinberg, Strong Morita equivalence of inverse semigroups, To appear in {\em Houston J. Math.}

\bibitem{T1} S.~Talwar, Morita equivalence for semigroups, {\em J.~Aust. Math. Soc. (Series~A)} {\bf 59} (1995), 81--111.

\bibitem{T2} S.~Talwar, Strong morita equivalence and a generalisation of the Rees theorem, 
{\em J.~Algebra} {\bf 181} (1996), 371--394.

\bibitem{T3} S.~Talwar, Strong morita equivalence and the synthesis theorem, 
{\em Internat. J. Algebra Comput.} {\bf 6} (1996), 123--141.

\bibitem{W} R.~F.~C.~Walters, Sheaves and Cauchy-complete categories, {\em Cah. Topol. G\'eom. Diff\'er. Cat\'eg.} {\bf 22} (1981), 283--286.

\end{thebibliography}
\end{document}